\theoremstyle{plain}
\newtheorem{theorem}{Theorem}[section]
\newtheorem{lemma}[theorem]{Lemma}
\newtheorem{proposition}[theorem]{Proposition}
\newtheorem{corollary}[theorem]{Corollary}
\theoremstyle{definition}
\newtheorem{definition}[theorem]{Definition}
\newtheorem*{acknowledgements}{Acknowledgements}
\DeclareMathOperator{\tr}{tr}
\DeclareMathOperator{\Ext}{Ext}
\DeclareMathOperator{\Hom}{Hom}
\begin{document}

\title{A new proof of the Tikuisis-White-Winter Theorem}
\author{Christopher Schafhauser}
\address{Department of Pure Mathematics, University of Waterloo, 200 University Avenue West, Waterloo, ON, Canada, N2L 3G1}
\email{cschafhauser@uwaterloo.ca}
\subjclass[2010]{Primary: 46L05}
\keywords{Quasidiagonal traces, Amenable traces, Extensions, KK-theory, Noncommutative Weyl-von Neumann Theorem}
\date{\today}
\begin{abstract}
  A trace on a C$^*$-algebra is amenable (resp. quasidiagonal) if it admits a net of completely positive, contractive maps into matrix algebras which approximately preserve the trace and are approximately multiplicative in the 2-norm (resp. operator norm).  Using that the double commutant of a nuclear C$^*$-algebra is hyperfinite, it is easy to see that traces on nuclear C$^*$-algebras are amenable.  A recent result of Tikuisis, White, and Winter shows that faithful traces on separable, nuclear C$^*$-algebras in the UCT class are quasidiagonal.  We give a new proof of this result using the extension theory of C$^*$-algebras and, in particular, using a version of the Weyl-von Neumann Theorem due to Elliott and Kucerovsky.
\end{abstract}
\maketitle

\section{Introduction}

A trace on a unital C$^*$-algebra $A$ is a linear functional $\tau$ such that $\tau(1) = \|\tau\| = 1$ and $\tau(ab) = \tau(ba)$ for all $a, b \in A$.  The study of traces on C$^*$-algebras and von Neumann algebras has a long history.  In the case $A$ is commutative, the Riesz Representation Theorem yields a canonical bijection between traces on $A$ and regular, Borel, probability measures on the Gelfand spectrum of $A$.  In the non-commutative setting, studying traces gives, in some sense, connections between the topological and measurable sides of operator algebras.  In particular, every trace on a C$^*$-algebra induces a finite von Neumann algebra through the GNS construction, and this leads to rich connections between C$^*$-algebras and von Neumann algebras.

In Connes's seminal work on injective factors in \cite{Connes:InjectiveFactors}, a more refined study of traces surfaced.  Connes proved that a $\mathrm{II}_1$-factor with separable predual is injective if, and only if, the trace admits certain finite-dimensional approximations -- in modern language, the trace is \emph{amenable}.  Amenable traces were studied further by Kirchberg in \cite{Kirchberg:AmenableTraces}, and later, several other approximation properties of a similar nature were introduced by N. Brown in \cite{BrownQDTraces}.  We now formally define these approximation conditions.

\begin{definition}\label{defn:ApproxOfTraces}
Let $\tr_k$ denote the unique trace on $\mathbb{M}_k$, the algebra of $k \times k$ matrices over $\mathbb{C}$, and, for all $a \in \mathbb{M}_k$, define $\|a\|_2 = \tr_k(a^*a)^{1/2}$.  Suppose $\tau$ is a trace on a separable C$^*$-algebra $A$.
\begin{enumerate}
  \item We call $\tau$ \emph{amenable} if for each integer $n \geq 1$, there is an integer $k(n) \geq 1$ and a completely positive, contractive map $\varphi_n : A \rightarrow \mathbb{M}_{k(n)}$ such that
      \[\lim_{n \rightarrow \infty} \| \varphi_n(ab) - \varphi_n(a) \varphi_n(b) \|_2 = 0, \quad \text{and} \quad \lim_{n \rightarrow \infty} \tr_{k(n)}(\varphi_n(a)) = \tau(a) \]
      for all $a, b \in A$.
  \item We call $\tau$ \emph{quasidiagonal} if for each integer $n \geq 1$, there is an integer $k(n) \geq 1$ and a completely positive, contractive map $\varphi_n : A \rightarrow \mathbb{M}_{k(n)}$ such that
      \[\lim_{n \rightarrow \infty} \| \varphi_n(ab) - \varphi_n(a) \varphi_n(b) \| = 0, \quad \text{and} \quad \lim_{n \rightarrow \infty} \tr_{k(n)}(\varphi_n(a)) = \tau(a) \]
      for all $a, b \in A$.
\end{enumerate}
\end{definition}

Note that for each integer $k \geq 1$ and each $x \in \mathbb{M}_k$, we have
\[ \|x\|_2 \leq \|x\| \leq \sqrt{k} \|x\|_2. \]
The first inequality implies quasidiagonal traces are amenable.  A question of N.\ Brown asks whether every amenable trace is quasidiagonal (see \cite{BrownQDTraces}).  Substantial progress on this problem has recently been made by Tikuisis, White, and Winter in \cite{TikuisisWhiteWinter}.  We refer the reader to \cite{BrownQDTraces} and \cite{TikuisisWhiteWinter} for a further discussion on the classes of amenable and quasidiagonal traces.  In this paper, we provide a self-contained proof of the the main result of \cite{TikuisisWhiteWinter}.  In fact, we prove the following more general result given by Gabe in \cite{Gabe:TWW}.

\begin{theorem}[see \cite{Gabe:TWW} and \cite{TikuisisWhiteWinter}]\label{thm:GabeTWW}
If $A$ is a separable, exact C*-algebra satisfying the Universal Coefficient Theorem in KK-theory, then every faithful, amenable trace on $A$ is quasidiagonal.
\end{theorem}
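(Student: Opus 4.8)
The plan is to recast quasidiagonality of $\tau$ as a lifting problem for $*$-homomorphisms and to solve that problem with $KK$-theory together with a Weyl--von Neumann absorption theorem. Fix a free ultrafilter $\omega$ on $\mathbb{N}$ and integers $k(n)\to\infty$; let $\mathcal{E}=\prod_\omega\mathbb{M}_{k(n)}$ be the norm ultraproduct, let $\mathcal{J}\subseteq\mathcal{E}$ be the trace-kernel ideal $\{[x_n]:\tr_{k(n)}(x_n^*x_n)\to_\omega 0\}$, and put $\mathcal{M}:=\mathcal{E}/\mathcal{J}$, a copy of $\mathcal{R}^\omega$ with its limit trace $\tau_\omega$. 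Unwinding Definition~\ref{defn:ApproxOfTraces}, amenability of $\tau$ says precisely that the maps $\varphi_n$ there assemble into a unital $*$-homomorphism $\Psi\colon A\to\mathcal{M}$ with $\tau_\omega\circ\Psi=\tau$, and faithfulness of $\tau$ forces $\Psi$ to be injective. Conversely, to prove $\tau$ quasidiagonal it suffices to build a unital $*$-homomorphism $\widetilde{\Psi}\colon A\to\mathcal{E}$ with $\tau_\omega\circ\widetilde{\Psi}=\tau$: since $A$ is separable and exact, $\widetilde{\Psi}$ lifts to a completely positive contraction $A\to\prod_n\mathbb{M}_{k(n)}$, and, after passing to a subsequence along which the finitely many relevant $\omega$-limits become ordinary limits, its coordinate maps witness quasidiagonality of $\tau$. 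As $\tau_\omega$ factors through the quotient map $q\colon\mathcal{E}\to\mathcal{M}$, the whole problem reduces to lifting $\Psi$ through $q$ to a unital $*$-homomorphism.

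I would realize this lifting problem inside extension theory. Pulling the unital extension $0\to\mathcal{J}\to\mathcal{E}\xrightarrow{q}\mathcal{M}\to 0$ back along $\Psi$ gives a unital extension $\mathcal{S}\colon\ 0\to\mathcal{J}\to E\to A\to 0$, and lifting $\Psi$ is the same as splitting $\mathcal{S}$ by a $*$-homomorphism. Now $\mathcal{S}$ is semisplit --- the completely positive contraction $[\varphi_n]\colon A\to\mathcal{E}$ lifts $\Psi$ --- and, $A$ being exact, the semisplit extensions of $A$ by $\mathcal{J}$ form a group under Cuntz sum, so $\mathcal{S}$ defines a class $[\mathcal{S}]\in KK^1(A,\mathcal{J})$. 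Next I would apply the Weyl--von Neumann theorem of Elliott and Kucerovsky: after the routine reduction of $\mathcal{S}$ to an extension by a separable, stable, $\sigma$-unital sub-ideal of $\mathcal{J}$ (possible because $A$ is separable) and a verification that the resulting unital extension is purely large --- the step at which injectivity of $\Psi$, i.e.\ faithfulness of $\tau$, is used --- one concludes that $\mathcal{S}$ is absorbing, hence $*$-split precisely when $[\mathcal{S}]=0$.

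It remains to arrange $[\mathcal{S}]=0$, and here the hypothesis that $A$ satisfies the UCT enters. The UCT embeds $\Ext^1_{\mathbb{Z}}(K_*(A),K_*(\mathcal{J}))$ into $KK^1(A,\mathcal{J})$ with quotient $\Hom(K_*(A),K_{*+1}(\mathcal{J}))$. A short computation with the six-term sequence of $0\to\mathcal{J}\to\mathcal{E}\to\mathcal{M}\to 0$ shows $K_1(\mathcal{J})=0$ --- projections in $\mathcal{M}$ lift to projections in $\mathcal{E}$, killing the exponential map $K_0(\mathcal{M})\to K_1(\mathcal{J})$, and $K_1(\mathcal{E})=0$ since unitaries in $\mathcal{E}$ are exponentials --- and the same lifting of unitaries forces the index map $K_1(A)\to K_0(\mathcal{J})$ of $\mathcal{S}$ to vanish. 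Hence the $\Hom$-part of $[\mathcal{S}]$ is automatically zero, and $[\mathcal{S}]$ lies in the $\Ext^1_{\mathbb{Z}}(K_0(A),K_0(\mathcal{J}))$-summand, represented by the group extension $0\to K_0(\mathcal{J})\to K_0(E)\to K_0(A)\to 0$. To annihilate this residual class, I would choose an auxiliary unital trace-preserving $*$-homomorphism $\rho\colon A\to\mathcal{M}$ whose pulled-back extension has class $-[\mathcal{S}]$, and replace $\Psi$ by $\Psi\oplus\rho$ formed inside $M_2(\mathcal{M})\cong\mathcal{M}$: this still intertwines $\tau$ with $\tfrac12(\tau+\tau)=\tau$ but has trivial extension class, so it lifts through $q$ by the previous paragraph, and the first paragraph then finishes the proof.

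The main obstacle is exactly the construction of $\rho$: one needs a realization (existence) theorem for extensions of $A$ by $\mathcal{J}$ that yields a representative which is \emph{trace-preserving} into $\mathcal{M}=\mathcal{R}^\omega$ and realizes a prescribed $KK^1$-class, so the UCT, the faithful trace, and the internal structure of $\mathcal{R}^\omega$ must be used together --- this is the technical heart, subsuming what in the Tikuisis--White--Winter argument is handled by von Neumann algebraic approximation. By comparison, completely positive liftability into $\prod_n\mathbb{M}_{k(n)}$, the purely large verification, the reindexing from $\omega$-limits to sequential limits, and the $K$-theory of $\mathcal{E}$, $\mathcal{J}$, $\mathcal{M}$ are comparatively routine.
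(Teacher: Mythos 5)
Your overall architecture matches the paper's: recast quasidiagonality as lifting a trace-preserving morphism $A\to\mathcal{R}^\omega$ through the quotient of a UHF/matrix ultraproduct by its trace-kernel ideal, form the pullback extension, reduce to a separable sub-ideal, and combine the Elliott--Kucerovsky absorption theorem with a UCT computation to split it. But there is a genuine gap at the decisive step, and you have flagged it yourself: you leave unconstructed the auxiliary trace-preserving $\rho\colon A\to\mathcal{M}$ realizing the class $-[\mathcal{S}]$ in $\Ext^1_{\mathbb{Z}}(K_0(A),K_0(\mathcal{J}))$, calling it ``the technical heart.'' No such existence theorem is available off the shelf (realizing a prescribed $KK^1$-class by a \emph{unital, trace-preserving} morphism into $\mathcal{R}^\omega$ is not something the UCT gives you), and in fact the entire correction step is unnecessary. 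The point you are missing is a $K$-theory computation: the trace induces an isomorphism $K_0(\mathcal{J})\cong G_0$, the group of bounded rational sequences with $\omega$-limit zero modulo those vanishing along $\omega$, and this group is \emph{divisible}, hence injective as an abelian group. Therefore $\Ext^1_{\mathbb{Z}}(K_*(A),K_*(\mathcal{J}))=0$ outright, the UCT collapses to $\Ext_{\mathrm{nuc}}(A,\mathcal{J})\cong\Hom_{\mathbb{Z}}(K_1(A),K_0(\mathcal{J}))$, and the vanishing of the index map (which you do establish correctly) already forces $[\mathcal{S}]=0$. This is Proposition~\ref{prop:KTheoryComputation} and the proof of Theorem~\ref{thm:splitting}.

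Two further points are treated too lightly. First, the ``purely large'' verification is not routine: to invoke the Elliott--Kucerovsky/Gabe absorption theorem one needs the separable sub-ideal to be stable and to have the corona factorization property, and the paper spends Section~\ref{sec:TraceKernelIdeal} establishing real rank zero, stable rank one, and almost unperforation of $V(J)$ precisely to get these; fullness of the Busby invariant then uses simplicity of (a separable simple subalgebra of) $\mathcal{R}^\omega$ together with faithfulness of $\Psi$. Second, your extension $\mathcal{S}$ is unital when $\tau$ is a trace on a unital $A$, and this breaks the fullness hypothesis for the \emph{unitized} extension (the map $a+\lambda 1_{A^\dag}\mapsto\Psi(a)+\lambda$ is not injective when $\Psi$ is unital), while the split extension one must absorb is non-unital. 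The paper circumvents this by first proving the non-unital case via a corner embedding $\mathcal{R}^\omega\hookrightarrow\mathbb{M}_2(\mathcal{R})^\omega\cong\mathcal{R}^\omega$ and then recovering a unital lift using the classification of normal $*$-homomorphisms from hyperfinite von Neumann algebras into $\mathrm{II}_1$-factors; some such device is needed in your argument as well. Finally, note that exactness of $A$ alone does not make your cpc splitting weakly nuclear --- one needs Dadarlat's lemma (Lemma~\ref{lem:Exactness}) to upgrade the liftable map $\Psi$ to a nuclear one before the extension defines a class in $\Ext_{\mathrm{nuc}}$.
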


We end this introduction with an outline of the proof and of the paper.  Fix a free ultrafilter $\omega$ on $\mathbb{N}$.  Let $\mathcal{Q}_\omega$ and $\mathcal{R}^\omega$ denote the uniform and tracial ultrapowers of the universal UHF-algebra $\mathcal{Q}$ and the hyperfinite $\mathrm{II}_1$-factor $\mathcal{R}$.  Let $\tr_\omega$ and $\tr^\omega$ denote the traces on $\mathcal{Q}_\omega$ and $\mathcal{R}^\omega$ induced by the unique traces on $\mathcal{Q}$ and $\mathcal{R}$.  The following result is well-known to experts.  The second part follows from the proof of Proposition 1.4(ii) in \cite{TikuisisWhiteWinter} and the first part has a similar proof.

\begin{proposition}
Let $\tau$ be a trace on a separable C*-algebra $A$.
\begin{enumerate}
  \item The trace $\tau$ is amenable if, and only if, there is a *-homomorphism $\varphi : A \rightarrow \mathcal{R}^\omega$ with a completely positive, contractive lift $A \rightarrow \ell^\infty(\mathcal{R})$ such that $\mathrm{tr}^\omega \circ \varphi = \tau$.
  \item The trace $\tau$ is quasidiagonal if, and only if, there is a *-homomorphism $\varphi : A \rightarrow \mathcal{Q}_\omega$ with a completely positive, contractive lift $A \rightarrow \ell^\infty(\mathcal{Q})$ such that $\mathrm{tr}_\omega \circ \varphi = \tau$.
\end{enumerate}
\end{proposition}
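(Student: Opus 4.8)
The plan is to establish (1) in full and to obtain (2) by repeating the argument verbatim, with the operator norm in place of the $2$-norm and the universal UHF algebra $\mathcal{Q}$ in place of the hyperfinite factor $\mathcal{R}$. The one substantive external input, needed in both cases, is a sequence of unital, trace-preserving conditional expectations onto an increasing sequence of matrix subalgebras with dense union --- namely $E_m \colon \mathcal{R} \to \mathbb{M}_{2^m}$, respectively $E_m \colon \mathcal{Q} \to \mathbb{M}_{d_m}$ --- each completely positive and contractive, and satisfying $E_m \to \id$ in the point-$2$-norm topology (respectively, the point-norm topology). For $\mathcal{R}$ this holds because on $L^2(\mathcal{R},\tr)$ the maps $E_m$ restrict to the orthogonal projections onto the subspaces $L^2(\mathbb{M}_{2^m},\tr)$, which increase to the identity (a form of martingale convergence); for $\mathcal{Q}$ it is immediate, since $E_m$ fixes the norm-dense union $\bigcup_k \mathbb{M}_{d_k}$ pointwise from level $m$ onward. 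I will also use freely that a completely positive contraction does not increase the operator norm, that $\|x\|_2 \le \|x\|$ on matrix algebras, and that a product $x_m y_m$ of uniformly bounded sequences converges in the $2$-norm (resp.\ operator norm) whenever both factors do.

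For the ``if'' direction of (1), I would start from a $*$-homomorphism $\varphi \colon A \to \mathcal{R}^\omega$ with completely positive contractive lift $(\psi_n)_n \colon A \to \ell^\infty(\mathcal{R})$ and $\tr^\omega \circ \varphi = \tau$, and fix a finite set $F \subseteq A$ and $\varepsilon > 0$. Multiplicativity of $\varphi$ gives $\lim_\omega \|\psi_n(ab) - \psi_n(a)\psi_n(b)\|_2 = 0$ and $\tr^\omega \circ \varphi = \tau$ gives $\lim_\omega \tr(\psi_n(a)) = \tau(a)$ for all $a,b \in F$; intersecting finitely many members of $\omega$, I fix a single index $n$ making $\|\psi_n(ab) - \psi_n(a)\psi_n(b)\|_2$ and $|\tr(\psi_n(a)) - \tau(a)|$ all $< \varepsilon$ on $F$. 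Since $\{\psi_n(c) : c \in F \cup \{ab : a,b \in F\}\}$ is now a \emph{fixed} finite subset of $\mathcal{R}$, I can choose $m$ so large that $\|E_m\psi_n(c) - \psi_n(c)\|_2$ is as small as desired for each such $c$; together with $\|E_m\psi_n(a)\| \le \|\psi_n(a)\|$ and the continuity of products, this makes $\varphi_F := E_m \circ \psi_n \colon A \to \mathbb{M}_{2^m}$ approximately multiplicative in the $2$-norm on $F$, while $\tr_{2^m}(\varphi_F(a)) = \tr(\psi_n(a)) \approx \tau(a)$ because $E_m$ preserves the trace. Running this over an increasing sequence $F_1 \subseteq F_2 \subseteq \cdots$ with dense union and $\varepsilon = 1/j$ produces maps $\varphi_j \colon A \to \mathbb{M}_{k(j)}$, and since each $\varphi_j$ is completely positive and contractive, approximating arbitrary $a,b \in A$ by elements of $F_j$ shows these maps witness that $\tau$ is amenable.

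For the ``only if'' direction, I would start from maps $\varphi_n \colon A \to \mathbb{M}_{k(n)}$ as in Definition~\ref{defn:ApproxOfTraces}(1) and fix unital, trace-preserving embeddings $\mathbb{M}_{k(n)} \hookrightarrow \mathcal{R}$ (using $\mathcal{R} \cong \mathcal{R} \otimes \mathbb{M}_{k(n)}$), under which the $2$-norm and trace on $\mathbb{M}_{k(n)}$ agree with those of $\mathcal{R}$. Then $a \mapsto (\varphi_n(a))_n$ is a completely positive contractive map $A \to \ell^\infty(\mathcal{R})$, and composing with the quotient onto $\mathcal{R}^\omega$ gives a completely positive contraction $\varphi \colon A \to \mathcal{R}^\omega$ which is multiplicative by the $2$-norm approximate multiplicativity of the $\varphi_n$, hence a $*$-homomorphism; it carries the lift $(\varphi_n)_n$ by construction, and $\tr^\omega(\varphi(a)) = \lim_\omega \tr_{k(n)}(\varphi_n(a)) = \tau(a)$ since the full sequence already converges to $\tau(a)$. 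Part (2) is then proved word for word, using $\mathcal{Q} \cong \mathcal{Q} \otimes \mathbb{M}_k$ for every $k$ to get the embeddings, the norm-continuous trace-preserving conditional expectations of $\mathcal{Q}$ onto its matrix subalgebras for the converse, and $\tr_\omega$, $\mathcal{Q}_\omega$ in the evident places.

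The only step I expect to require genuine care is the forward implication --- recovering finite-dimensional approximations from a $*$-homomorphism into an ultrapower --- and within it, keeping the order of quantifiers straight: one must first fix the sequence index $n$ (using that $\omega$ is a filter, so finitely many conditions can be met simultaneously) and only afterwards choose the matrix level $m$ in terms of the now-fixed finite set $\{\psi_n(c)\}$. Hyperfiniteness of $\mathcal{R}$, and the inductive-limit structure of $\mathcal{Q}$, enter the argument only through the existence of the conditional expectations $E_m$ and their convergence to the identity; everything else reduces to the elementary continuity of products of bounded sequences and a diagonalization over an exhausting sequence of finite subsets of $A$.
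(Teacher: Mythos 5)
Your argument is correct and is essentially the standard one: the paper itself does not prove this proposition but delegates it to Proposition 1.4(ii) of the Tikuisis--White--Winter paper, and the proof there proceeds exactly as you do --- unital trace-preserving embeddings $\mathbb{M}_{k(n)} \hookrightarrow \mathcal{R}$ (resp.\ $\mathcal{Q}$) for one implication, and trace-preserving conditional expectations onto an increasing exhausting sequence of matrix subalgebras, applied after first fixing a single index $n$ via the filter property of $\omega$, for the other. Your attention to the quantifier order (fix $n$ first, then choose the matrix level $m$ for the resulting finite subset of $\mathcal{R}$) and to the closing $\varepsilon/3$-argument extending approximate multiplicativity from the dense sets $F_j$ to all of $A$ covers the only points where such a proof could go wrong.
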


Viewing $\mathcal{Q}$ as a weak*-dense subalgebra of $\mathcal{R}$, the embedding $\mathcal{Q} \hookrightarrow \mathcal{R}$ induces a surjective, trace-preserving *-homomorphism $\pi: \mathcal{Q}_\omega \rightarrow \mathcal{R}^\omega$ by Theorem 3.3 in \cite{KirchbergRordam}.  This allows us to rephrase Theorem \ref{thm:GabeTWW} as a lifting problem.  In fact, we will deduce Theorem \ref{thm:GabeTWW} from the following lifting result.

\begin{theorem}\label{thm:LiftingTheorem}
If $A$ is a separable, exact C*-algebra satisfying the UCT and $\varphi : A \rightarrow \mathcal{R}^\omega$ is a faithful, nuclear *-homomorphism, then there is a nuclear *-homomorphism $\psi : A \rightarrow \mathcal{Q}_\omega$ with $\pi \circ \psi = \varphi$.  If $A$ and $\varphi$ are unital, we may choose $\psi$ to be unital.
\end{theorem}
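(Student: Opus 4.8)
The plan is to recast the lifting problem as the problem of splitting a C$^*$-algebra extension, to show that the resulting $KK^1$-obstruction vanishes using the UCT together with the $K$-theory of the ultrapowers, and to upgrade ``trivial class'' to ``split'' using the Elliott--Kucerovsky absorption theorem. First I would reduce to the unital case: replacing $A$ by its unitization $\widetilde A$ and $\varphi$ by its unitization (still faithful and nuclear), a unital lift restricts to the desired lift of $\varphi$, so we may assume $A$ and $\varphi$ are unital. Since $\varphi$ is nuclear and $\pi \colon \mathcal{Q}_\omega \to \mathcal{R}^\omega$ is surjective, the Choi--Effros lifting theorem produces a unital, nuclear, completely positive contractive lift $\bar\psi \colon A \to \mathcal{Q}_\omega$ of $\varphi$. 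Setting $J = \ker\pi$ and forming the pullback
\[ E = \{(a,x) \in A \oplus \mathcal{Q}_\omega : \varphi(a) = \pi(x)\}, \]
the extension $0 \to J \to E \to A \to 0$ admits the completely positive contractive splitting $a \mapsto (a,\bar\psi(a))$, hence defines a class $[E] \in KK^1(A,J)$; and a unital $*$-homomorphic splitting of $E \to A$ is exactly a unital $*$-homomorphism $\psi \colon A \to \mathcal{Q}_\omega$ with $\pi\psi = \varphi$. So it suffices to prove that $E$ is a split extension.

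Next I would show $[E] = 0$. The UCT for $A$ gives a short exact sequence
\[ 0 \to \Ext^1\!\big(K_*(A), K_{*+1}(J)\big) \to KK^1(A,J) \to \Hom\!\big(K_*(A), K_{*+1}(J)\big) \to 0, \]
in which the image of $[E]$ in the $\Hom$-term is the index map of the extension $0 \to J \to E \to A \to 0$. By the Mayer--Vietoris sequence attached to the pullback $E$ (available because $\pi$ is surjective), this index map vanishes as soon as $\pi_* \colon K_*(\mathcal{Q}_\omega) \to K_*(\mathcal{R}^\omega)$ is surjective, which I would verify directly: $K_1(\mathcal{R}^\omega) = 0$ since $\mathcal{R}^\omega$ is a von Neumann algebra, and projections over $\mathcal{R}^\omega$ lift to projections over $\mathcal{Q}_\omega$ because $J$ has real rank zero. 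Granting the standard structural fact that $K_*(\mathcal{Q}_\omega)$ is a $\mathbb{Q}$-vector space — whence so are its quotient $K_*(\mathcal{R}^\omega)$ and the kernel $K_*(J)$ — the group $\Ext^1(K_*(A), K_{*+1}(J))$ vanishes, a $\mathbb{Q}$-vector space being an injective abelian group, and therefore $[E] = 0$.

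Finally I would deduce that $E$ splits from the Elliott--Kucerovsky theorem: a nuclear, purely large extension of a separable C$^*$-algebra by a stable, $\sigma$-unital C$^*$-algebra is absorbing, and an absorbing extension whose $KK^1$-class vanishes is split (if $[E]=0$ then $E$ becomes trivial after adding a trivial extension, which absorption reabsorbs). The extension $E$ is nuclear since $\bar\psi$ is, and this is where the faithfulness hypothesis enters: if $e = (a,x) \in E \setminus J$ then $a \neq 0$, so $\varphi(a) = \pi(x) \neq 0$, so $x \notin J$, which forces the hereditary subalgebra $\overline{e^*Je} \cong \overline{x^*Jx}$ to be large inside $J$. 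The main obstacle, and the technical heart of the argument, is that $J = \ker\pi$ is non-separable and a priori neither $\sigma$-unital nor stable, so before invoking Elliott--Kucerovsky one must descend to a separable sub-extension: choose a separable subalgebra of $\mathcal{R}^\omega$ containing $\varphi(A)$, lift it to a separable subalgebra $C \subseteq \mathcal{Q}_\omega$ for which $C \cap J$ is stable and $\sigma$-unital, the extension $0 \to C \cap J \to C \to \pi(C) \to 0$ is purely large, and the $K$-theoretic input above is retained so the pulled-back extension still has trivial class. Producing such a sub-extension is precisely where the reindexing and saturation properties of $\mathcal{Q}_\omega$ and $\mathcal{R}^\omega$, and the structure theory of $\mathcal{Q}$ (strict comparison, real rank zero, stability criteria), get used. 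Throughout one keeps track of unitality and nuclearity so that the splitting $\psi$ is unital in the unital case and nuclear in general.
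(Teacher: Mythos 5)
Your overall strategy --- recasting the lifting problem as the splitting of the pullback extension, killing the $\Ext_{\mathbb{Z}}^{1}$-term of the UCT via divisibility of $K_*(J)$ and the index map via $K_1(\mathcal{R}^\omega)=0$, and invoking Elliott--Kucerovsky absorption after descending to a separable sub-extension --- is exactly the paper's. But your opening reduction goes in the wrong direction, and this creates a genuine gap. You reduce to the case where $A$ and $\varphi$ are \emph{unital}; the paper reduces to the case where $\varphi$ is \emph{not} unital, and for good reason. The non-unital absorption criterion (Gabe's refinement of Elliott--Kucerovsky, Theorem 2.6 of \cite{Gabe:AbsorbingExtensions}) requires the extension to be \emph{unitizably full}: the Busby invariant of the unitized extension $0 \to J_0 \to E_0^\dag \to A^\dag \to 0$ must be full. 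That Busby invariant is $\beta\circ\varphi_0^\dag$, and when $\varphi_0$ is unital it annihilates the nonzero element $1_{A^\dag} - 1_A$, so the unitized extension is not full and the criterion fails --- this is precisely why Theorem \ref{thm:PullbackExtension} excludes the case $A$ unital with $\varphi(1)=1$. If you instead use the unital Elliott--Kucerovsky theorem (purely large implies absorption of \emph{unital} trivial extensions), you hit the strong-versus-weak $\Ext$ discrepancy: $\langle\eta\rangle=0$ in $\Ext_{\mathrm{nuc}}(A,J_0)$ only says $\eta\oplus\sigma$ is strongly unitarily equivalent to a split extension for some split $\sigma$ that need not be unital, so unital absorption does not let you cancel it, and even if it did, the resulting $*$-homomorphic splitting need not carry $1_A$ to the unit. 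The paper sidesteps all of this by composing $\varphi$ with a corner embedding $\mathcal{R}^\omega \hookrightarrow \mathbb{M}_2(\mathcal{R}^\omega)\cong\mathcal{R}^\omega$ to force non-unitality, running the extension argument there, and then recovering the unital statement by an entirely separate von Neumann algebraic argument: exactness gives local reflexivity, hence uniform amenability of $\tr^\omega\circ\varphi$, hence hyperfiniteness of $\pi_\tau(A)''$, and the uniqueness of normal $*$-homomorphisms from hyperfinite von Neumann algebras into $\mathrm{II}_1$-factors lets one conjugate a unital trace-preserving lift into an honest unital lift of $\varphi$. Some substitute for this step is unavoidable in your outline.

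A second, smaller gap: the splitting produced by the absorption argument is only \emph{weakly nuclear}, and you assert without argument that one ``keeps track of nuclearity.'' Upgrading the weakly nuclear $*$-homomorphic splitting to a genuinely nuclear $\psi\colon A\to\mathcal{Q}_\omega$ is one of the two places where exactness of $A$ enters (weakly nuclear maps out of exact algebras are nuclear, Proposition 3.2 of \cite{Gabe:LiftingTheorems}), combined in the paper with a diagram chase through maximal tensor products to pass from nuclearity of $\lambda_0\circ\psi_0$ and of $\pi_0\circ\psi_0$ to nuclearity of $\psi_0$ itself. Your proposal nowhere indicates where exactness is used, and this step does require an argument.
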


We now rephrase the problem once more.  Consider the \emph{trace-kernel ideal} $J := \ker(\pi) \subseteq \mathcal{Q}_\omega$. Given a separable C$^*$-algebra $A$ and a *\=/homomorphism $\varphi : A \rightarrow \mathcal{R}^\omega$, there is a pullback extension
\begin{equation}\label{eqn:PullBack}
 \begin{tikzcd}
   \hphantom{.}\varphi^*\eta: \quad 0 \arrow{r} & J \arrow{r} \arrow[-, double equal sign distance]{d} & E \arrow{r}{\tilde{\pi}} \arrow{d}{\tilde{\varphi}} & A \arrow{r} \arrow{d}{\varphi} & 0\hphantom{.\varphi^*\eta: \quad}\\
   \hphantom{.\varphi^*}\eta: \quad 0 \arrow{r} & J \arrow{r} & \mathcal{Q}_\omega \arrow{r}{\pi} & \mathcal{R}^\omega \arrow{r} & 0.\hphantom{\varphi^*\eta: \quad}
\end{tikzcd}
\end{equation}
To produce a *-homomorphism $\psi : A \rightarrow \mathcal{Q}_\omega$ lifting $\varphi$, it is enough to show $\varphi^*\eta$ admits a *-homomorphic splitting.  When $\varphi$ is nuclear, the extension $\varphi^*\eta$ has a weakly nuclear splitting and hence, ignoring separability issues for the moment,\footnote{The non-separability is handled by first passing to a separable subextension and doing calculations there.  See Section \ref{sec:SeparablePullback} for the details.} the extension defines an element in the Kasparov group $\Ext_{nuc}(A, J)$.  With the help of the Universal Coefficient Theorem, we show this extension defines the trivial element of $\Ext_{nuc}(A, J)$, and therefore, it is stably unitarily equivalent to a split extension.

A version of the Weyl-von Neumann Theorem due to Elliott and Kucerovsky in \cite{ElliottKucerovsky} allows one to control the stabilizations in $\Ext_{nuc}(A, J)$ and to conclude that $\varphi^*\eta$ admits a weakly nuclear, *-homomorphic splitting.  The faithfulness of $\varphi$ is used to verify the hypotheses of this theorem and the exactness of $A$ is used to show the lift $\psi$ obtained from this argument is nuclear.

It worth noting that the original proof of Theorem \ref{thm:GabeTWW} given by Tikuisis, White, and Winter in \cite{TikuisisWhiteWinter} and its refinement by Gabe in \cite{Gabe:TWW} also depend on a version of the Weyl-von Neumann Theorem in a less direct way.  Dadarlat and Eilers prove a version of the Weyl-von Neumann Theorem in
\cite[Theorem 2.22]{DadarlatEilers} which is used to prove a \emph{stable uniqueness theorem} in \cite{DadarlatEilers}.  This stable uniqueness theorem is, in turn, needed at a key step in \cite{TikuisisWhiteWinter} as a way of passing from homotopy equivalence to stable approximate unitary equivalence.  This is also where the faithfulness of $\tau$ and the UCT appear in the original proof of Theorem \ref{thm:GabeTWW} given in \cite{TikuisisWhiteWinter} and \cite{Gabe:TWW}.

The paper is organized as follows.  In Section \ref{sec:SplittingTheorem}, we recall the necessary definitions from the extension theory.  The key technical tool in this paper is Theorem \ref{thm:splitting} which gives a sufficient condition for an extension to split.  In Section \ref{sec:TraceKernelIdeal}, we collect several properties of the trace-kernel ideal $J$ appearing in \eqref{eqn:PullBack} above and show it satisfies all the hypotheses of Theorem \ref{thm:splitting} except separability.  In Section \ref{sec:SeparablePullback}, it is shown that the extension $\eta$ in \eqref{eqn:PullBack} can be replaced with a separable extension so that the corresponding pullback extension satisfies the conditions of our splitting result (Theorem \ref{thm:splitting}).  Finally, Section \ref{sec:MainResult} combines all the pieces and contains the proofs of Theorems \ref{thm:GabeTWW} and \ref{thm:LiftingTheorem}.

\section{A Splitting Result}\label{sec:SplittingTheorem}

The main result of this section is Theorem \ref{thm:splitting} which combines several known results in the extension theory to give a sufficient condition for an extension to split.  Before stating the theorem, we recall some definitions.

A unital C$^*$-algebra $A$ has \emph{stable rank one} if every element in $A$ is a limit of invertible elements in $A$ and has \emph{real rank zero} if every self-adjoint element of $A$ is a limit of invertible, self-adjoint elements in $A$.  A non-unital C$^*$-algebra is said to have stable rank one or real rank zero if its unitization has the corresponding property.

Let $V(A)$ denote the monoid of projections in $\mathbb{M}_\infty(A)$ up to Murray-von Neumann equivalence.  We say $V(A)$ is \emph{almost unperforated} if whenever $x , y \in V(A)$ and $k \geq 1$ is an integer such that $(k+1) x \leq k y$, we have $x \leq y$.

If $A$ and $I$ are C$^*$-algebras, an \emph{extension} of $A$ by $I$ is a short exact sequence
\begin{equation}\label{eqn:Extension}
 \begin{tikzcd} \eta: \quad 0 \arrow{r} & I \arrow{r} & E \arrow{r}{\pi} & A \arrow{r} & 0 \quad \hphantom{\eta:} \end{tikzcd}
\end{equation}
of C$^*$-algebras.  We refer to \cite[Chapter VII]{BlackadarKtheory} for the basic theory of extensions, although we recall some of the relevant definitions below.

The extension $\eta$ is called \emph{semisplit} if there is a completely positive, contractive splitting $\varphi : A \rightarrow E$ (i.e.\ $\pi \circ \varphi = \operatorname{id}_A$) and is called \emph{split} if $\varphi$ can be chosen to be a *-homomorphism.

The extension $\eta$ above induces a *-homomorphism $E \rightarrow M(I)$ where $E$ acts on $I$ by left multiplication.  This induces a *-homomorphism $\beta : A \rightarrow M(I)/I$ called the \emph{Busby invariant} of $\eta$.  The extension is called \emph{full} if for all non-zero $a \in A$, $\beta(a)$ generates $M(I)/I$ as a two-sided ideal.  The extension $\eta$ is called \emph{unitizably full} if the unitized extension
\[ \begin{tikzcd} \eta^\dag: \quad 0 \arrow{r} & J \arrow{r} & E^\dag \arrow{r}{\pi^\dag} & A^\dag \arrow{r} & 0 \quad \hphantom{\eta^\dag:}\end{tikzcd} \]
is full where $A^\dag$ and $E^\dag$ are the unitizations of $A$ and $E$; here, and throughout the paper, if $A$ or $E$ is unital, then a new unit is adjoined.  Note that if $\beta : A \rightarrow M(I)/I$ is the Busby invariant for $\eta$, then the Busby invariant for $\eta^\dag$ is given by $\beta^\dag : A^\dag \rightarrow M(I) / I$, $\beta^\dag(a + \lambda) = \beta(a) + \lambda$ for all $a \in A$ and $\lambda \in \mathbb{C}$.

Given two extensions $\eta_1$ and $\eta_2$ of $A$ by $I$ with Busby invariants $\beta_1, \beta_2 : A \rightarrow M(I) / I$, we say $\eta_1$ and $\eta_2$ are \emph{strongly unitarily equivalent} if there is a unitary $u \in U(M(I))$ such that $\mathrm{ad}(\rho(u)) \circ \beta_1 = \beta_2$ where $\rho : M(I) \rightarrow M(I) / I$ is the quotient map.

Suppose $A$ and $I$ are separable C$^*$-algebras and $I$ is stable.  There are isometries $s_1, s_2 \in M(I)$ such that $s_1s_1^* + s_2s_2^* = 1$.  Given extensions $\eta_1$ and $\eta_2$ of $A$ by $I$ with Busby invariants $\beta_1, \beta_2 : A \rightarrow M(I)$, define a *-homomorphism $\beta : A \rightarrow M(I) / I$ by $\beta(a) = s_1 \beta_1(a) s_1^* + s_2 \beta_2(a)s_2^*$.  Define a C$^*$-algebra
\[ E = \{ (a, b) \in A \oplus M(I) : \beta(a) = \rho(b) \} \]
where $\rho : M(I) \rightarrow M(I) / I$ denotes the quotient map.  If $\pi : E \rightarrow A$ denotes the projection onto the first coordinate and $\iota : I \rightarrow E$ denotes the inclusion into the second coordinate, then there is an extension
\[ \begin{tikzcd} 0 \arrow{r} & I \arrow{r}{\iota} & E \arrow{r}{\pi} & A \arrow{r} & 0 \end{tikzcd}. \]
We denote this extension by $\eta_1 \oplus \eta_2$.  Up to strong unitary equivalence, $\eta_1 \oplus \eta_2$ is independent of the choice of isometries $s_1$ and $s_2$.

When $A$ and $I$ are separable and $I$ is stable, let $\Ext^{-1}(A, I)$ denote the set of equivalence classes of semisplit extensions of $A$ by $I$ where two semisplit extensions $\eta_1$ and $\eta_2$ are equivalent if there is a split extension $\eta$ of $A$ by $I$ such that $\eta_1 \oplus \eta$ and $\eta_2 \oplus \eta$ are strongly unitarily equivalent.  For a semisplit extension $\eta$ of $A$ by $I$, let $\langle \eta \rangle$ denote the element in $\Ext^{-1}(A, I)$ defined by $\eta$.  The set $\Ext^{-1}(A, I)$ is an abelian group with addition given by $\langle \eta_1 \rangle + \langle \eta_2 \rangle = \langle \eta_1 \oplus \eta_2 \rangle$.  The zero element of $\Ext^{-1}(A, I)$ is the equivalence class of a split extension.  Moreover, for a semisplit extension $\eta$ of $A$ by $I$, $\langle \eta \rangle = 0$ if, and only if, there is a split extension $\eta'$ of $A$ by $I$ such that $\eta \oplus \eta'$ is a split extension.

Next we recall the Universal Coefficient Theorem (UCT).  We state this formally for the sake of showing how it appears in the proof of Theorem \ref{thm:GabeTWW}.  Given an extension $\eta$ as in \eqref{eqn:Extension}, there is a six term exact sequence
\[ \begin{tikzcd}
K_0(I) \arrow{r} & K_0(E) \arrow{r} & K_0(A) \arrow{d}{\partial_0} \\
K_1(A) \arrow{u}{\partial_1} & K_1(E) \arrow{l} & K_1(I) \arrow{l}
\end{tikzcd} \]
in K-theory associated to $\eta$.  This construction yields a group homomorphism
\[ \begin{tikzcd} \Ext^{-1}(A, I) \arrow{r}{\alpha} & \Hom_\mathbb{Z}(K_0(A), K_1(I)) \oplus \Hom_\mathbb{Z}(K_1(A), K_0(I)) \end{tikzcd}\]
given by sending $\langle \eta \rangle$ to $\partial_0 \oplus \partial_1$.  If the extension $\eta$ is in the kernel of $\alpha$, then the six-term exact sequence defines an extension
\[ \begin{tikzcd} K_i(\eta): \quad 0 \arrow{r} & K_i(I) \arrow{r} & K_i(E) \arrow{r} & K_i(A) \arrow{r} & 0 \quad \hphantom{K_i(\eta):} \end{tikzcd} \]
of abelian groups for $i = 0, 1$.  This yields a group homomorphism
\[ \begin{tikzcd} \ker(\alpha) \arrow{r}{\gamma} & \Ext_\mathbb{Z}^1(K_0(A), K_0(I)) \oplus \Ext_\mathbb{Z}^1(K_1(A), K_1(I)) \end{tikzcd} \]
defined by sending $\langle \eta \rangle$ to $K_0(\eta) \oplus K_1(\eta)$.  We say a separable C$^*$-algebra $A$ \emph{satisfies the UCT} if for all separable C$^*$-algebras $J$, the map $\alpha$ defined above is surjective and the map $\gamma$ defined above is an isomorphism.  In this case, there is a short exact sequence
\[ \begin{tikzcd} \Ext_\mathbb{Z}^1(K_*(A), K_*(I)) \arrow[tail]{r}{\gamma^{-1}} & \Ext^{-1}(A, J) \arrow[two heads]{r}{\alpha} & \Hom_\mathbb{Z}(K_*(A), K_{*+1}(I)). \end{tikzcd}\]

When the algebra $A$ is not assumed to be nuclear, a slight refinement of $\Ext^{-1}(A, I)$ is often more appropriate.  

A completely positive splitting $\sigma : A \rightarrow E$ for an extension $\eta$ as in \eqref{eqn:Extension} is called \emph{weakly nuclear} if for all $x \in I$, the map $A \rightarrow I$ given by $a \mapsto x \sigma(a) x^*$ is nuclear.  We say an extension is \emph{weakly nuclear} if it admits a weakly nuclear splitting.  By requiring all extensions in the definition of $\Ext^{-1}(A, I)$ to be weakly nuclear, one obtains an abelian group $\Ext_{\mathrm{nuc}}(A, I)$.  If either $A$ or $I$ is KK-equivalent to a nuclear C$^*$-algebra, then the canonical map
\[ \begin{tikzcd} \Ext_{\mathrm{nuc}}(A, I) \arrow{r} & \Ext^{-1}(A, I) \end{tikzcd} \]
is an isomorphism (see \cite{Skandalis:KKnuc}).  In particular, this is the case if either $A$ or $I$ is nuclear or if either $A$ or $I$ satisfies the UCT (Theorem 23.10.5 in \cite{BlackadarKtheory}).

Before stating the key splitting theorem, we make the following technical definition.  First recall that an abelian group $G$ is called \emph{divisible} if for every $g \in G$ and every $n \in \mathbb{Z} \setminus \{0\}$, there is an $h \in G$ such that $nh = g$.

\begin{definition}
A C$^*$-algebra $I$ is called an \emph{admissible kernel} if $I$ has real rank zero, stable rank one, $V(I)$ is almost unperforated, $K_0(I)$ is divisible, $K_1(I) = 0$, and every projection in $I \otimes \mathcal{K}$ is Murray-von Neumann equivalent to a projection in $I$.
\end{definition}

In Section \ref{sec:TraceKernelIdeal}, we will show that the trace-kernel ideal $J$ appearing in the diagram \eqref{eqn:PullBack} is an admissible kernel.  In Section \ref{sec:SeparablePullback}, we will further show how to produce arbitrarily large separable subalgebras of $J$ which are also admissible kernels.

The following theorem is the main technical tool in this paper.  The key result is a version of the Weyl-von Neumann Theorem proven by Elliott and Kucerovsky in \cite{ElliottKucerovsky} (with a correction in the non-unital case given by Gabe in \cite{Gabe:AbsorbingExtensions}) which characterizes nuclearly absorbing extensions.  An extension $\eta$ of a C$^*$-algebra $A$ by a stable C$^*$-algebra $I$ is called \emph{nuclearly absorbing} if for every extension $\eta'$ of $A$ by $I$ with a weakly nuclear, *-homomorphic splitting, the extension $\eta \oplus \eta'$ is strongly unitarily equivalent to $\eta$.

\begin{theorem}\label{thm:splitting}
Suppose $A$ and $I$ are separable C*-algebras and
\[ \begin{tikzcd} \eta: \quad 0 \arrow{r} & I \arrow{r} & E \arrow{r}{\pi} & A \arrow{r} & 0 \quad \hphantom{\eta:} \end{tikzcd} \]
is a weakly nuclear, unitizably full extension such that $I$ is an admissible kernel and the index map $K_1(A) \rightarrow K_0(I)$ is trivial.  If $A$ satisfies the UCT, then $\eta$ has a weakly nuclear, *-homomorphic splitting $A \rightarrow E$.
\end{theorem}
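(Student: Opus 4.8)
The plan is to show that the (weakly nuclear) extension $\eta$ represents the zero element of an appropriate Ext-group and then to invoke the Elliott--Kucerovsky absorption theorem to turn a ``stabilized'' splitting into a genuine $*$-homomorphic splitting of $\eta$ itself. Since a weakly nuclear extension is semisplit, $\eta$ determines a class in $\Ext_{\mathrm{nuc}}(A, I)$, and because $A$ satisfies the UCT the comparison map $\Ext_{\mathrm{nuc}}(A, -) \to \Ext^{-1}(A, -)$ is an isomorphism, so the UCT six-term machinery recalled above is available. The group $\Ext^{-1}$ and the direct-sum operation are only set up for stable ideals, so I would first replace $I$ by $\hat I := I \otimes \mathcal{K}$ and $\eta$ by the extension $\hat\eta$ of $A$ by $\hat I$ whose Busby invariant is the composite $A \xrightarrow{\ \beta\ } M(I)/I \hookrightarrow M(\hat I)/\hat I$, where $\beta$ is the Busby invariant of $\eta$. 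One checks routinely that $\hat\eta$ is again weakly nuclear and unitizably full, that $\hat I$ is stable and $\sigma$-unital with $K_0(\hat I) \cong K_0(I)$ divisible and $K_1(\hat I) \cong K_1(I) = 0$, that the index map $K_1(A) \to K_0(\hat I)$ remains trivial, and that $\eta$ is the compression of $\hat\eta$ to the corner of $M(\hat I)$ cut down by $1_{M(I)}$.

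Next I would show $\langle \hat\eta \rangle = 0$ in $\Ext^{-1}(A, \hat I)$ via the UCT exact sequence
\[ \Ext_{\mathbb{Z}}^1(K_*(A), K_*(\hat I)) \xrightarrow{\ \gamma^{-1}\ } \Ext^{-1}(A, \hat I) \xrightarrow{\ \alpha\ } \Hom_{\mathbb{Z}}(K_*(A), K_{*+1}(\hat I)). \]
The target of $\alpha$ is $\Hom_{\mathbb{Z}}(K_0(A), K_1(\hat I)) \oplus \Hom_{\mathbb{Z}}(K_1(A), K_0(\hat I))$; the first summand is zero since $K_1(\hat I) = 0$, and the component of $\alpha(\langle\hat\eta\rangle)$ in the second is the index map $K_1(A) \to K_0(\hat I)$, which vanishes by hypothesis, so $\langle\hat\eta\rangle \in \ker\alpha$. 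Then $\gamma(\langle\hat\eta\rangle)$ is defined and lies in $\Ext_{\mathbb{Z}}^1(K_0(A), K_0(\hat I)) \oplus \Ext_{\mathbb{Z}}^1(K_1(A), K_1(\hat I))$; the second summand vanishes since $K_1(\hat I) = 0$, and the first vanishes because $K_0(\hat I) \cong K_0(I)$ is divisible, hence an injective $\mathbb{Z}$-module, so $\Ext_{\mathbb{Z}}^1(-, K_0(I)) = 0$. As $\gamma$ is injective, $\langle\hat\eta\rangle = 0$; in particular there is a split extension $\eta'$ of $A$ by $\hat I$ (with weakly nuclear splitting) such that $\hat\eta \oplus \eta'$ is split.

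Now I would apply absorption. By the Elliott--Kucerovsky theorem, with Gabe's correction in the non-unital case, a weakly nuclear, unitizably full extension of the separable algebra $A$ by the stable, $\sigma$-unital algebra $\hat I$ is nuclearly absorbing provided $\hat I$ satisfies the regularity condition in the absorption criterion (pure largeness of full extensions); the real rank zero, stable rank one, and almost-unperforation hypotheses on $I$, all inherited by $\hat I$, are exactly what guarantees this. Granting it, $\hat\eta$ is nuclearly absorbing, so $\hat\eta$ is strongly unitarily equivalent to $\hat\eta \oplus \eta'$, which is split; conjugating a $*$-homomorphic splitting of $\hat\eta \oplus \eta'$ by the implementing unitary of $M(\hat I)$ then exhibits $\hat\eta$ itself as split, say by a weakly nuclear $*$-homomorphism coming from a $*$-homomorphic lift $\sigma \colon A \to M(\hat I)$ of the Busby invariant $\hat\beta$.

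It remains to transport this splitting back into $E$, and this is the step I expect to be the main obstacle. The projections $q := 1_{M(I)}$ and $\sigma(1_A)$ of $M(\hat I)$ (in the non-unital case one works instead with hereditary subalgebras and quasicentral approximate units) have the same, full, image in $M(\hat I)/\hat I$; using that every projection of $\hat I = I \otimes \mathcal{K}$ is Murray--von Neumann equivalent to one in $I$, together with $K_1(I) = 0$, one should be able to produce a unitary of $M(\hat I)$ that conjugates $\sigma$ into $qM(\hat I)q = M(I)$ and lifts the identity of $M(I)/I$; this yields a weakly nuclear $*$-homomorphic splitting $A \to E$ of $\eta$. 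This destabilization, and the unitization bookkeeping it requires, is precisely where the last two conditions in the definition of an admissible kernel are consumed, and making it run cleanly --- rather than the essentially formal Ext computation --- is where I would expect the real work to lie.
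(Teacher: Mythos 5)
Your overall strategy -- show the class of the extension vanishes in $\Ext_{\mathrm{nuc}}(A,I)$ using the UCT together with $K_1(I)=0$ and the divisibility (hence injectivity) of $K_0(I)$, then upgrade the stable splitting to an honest one via the Elliott--Kucerovsky absorption theorem with Gabe's non-unital correction -- is exactly the paper's strategy, and your Ext computation is correct as written. The difference, and the place where your argument has a genuine gap, is the stabilization. You replace $I$ by $\hat I = I\otimes\mathcal{K}$ on the grounds that $\Ext^{-1}$ and the sum operation require a stable ideal, and you are then forced into a final ``destabilization'' step -- conjugating the splitting of $\hat\eta$ back into the corner $1_{M(I)}M(\hat I)1_{M(I)}$ -- which you leave as a sketch and correctly identify as the hard part. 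That step is not routine: producing a unitary of $M(\hat I)$ that both conjugates $\sigma(1_A)$ under $1_{M(I)}$ and lifts the identity of $M(\hat I)/\hat I$ (so that the conjugated splitting still splits $\hat\eta$, and lands in $E$ rather than merely in $M(I)$) requires a comparison argument in the multiplier algebra that you have not supplied, and the non-unital case adds further complications you only gesture at.

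The paper avoids this entirely by observing that the admissible-kernel axioms already force $I$ itself to be stable, so no stabilization (and hence no destabilization) is ever needed: stable rank one gives cancellation of projections; separability plus real rank zero gives a countable approximate unit of projections; and then the hypothesis that every projection of $I\otimes\mathcal{K}$ is Murray--von Neumann equivalent to a projection of $I$ is precisely what R{\o}rdam's characterization of stability requires. (So your guess about where that axiom is ``consumed'' is off -- it is used to prove stability of $I$ up front, not to run a corner-compression at the end.) With $I$ stable, the absorption theorem applies directly to $\eta$: the paper verifies the hypothesis of Gabe's theorem by deriving the corona factorization property of $I$ from separability, real rank zero, and almost unperforation of $V(I)$ (Ortega--Perera--R{\o}rdam), whereas you appeal somewhat loosely to ``pure largeness''; either way, $\eta$ is nuclearly absorbing, hence strongly unitarily equivalent to the split extension $\eta\oplus\eta'$, and conjugating a $*$-homomorphic splitting of the latter by the implementing multiplier unitary immediately gives a weakly nuclear $*$-homomorphic splitting $A\to E$. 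To repair your write-up, either carry out the destabilization in full or, better, insert the stability argument for $I$ and drop $\hat I$ altogether.
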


\begin{proof}
As $I$ has stable rank one, $I$ has cancellation of projections by Proposition 6.5.1 in \cite{BlackadarKtheory}.  As $I$ is separable and has real rank zero, $I$ has a countable approximate unit consisting of projections by the equivalence of (1) and (3) in Theorem 6.5.6 of \cite{BlackadarKtheory}.  Now, since every projection in $I \otimes \mathcal{K}$ is Murray-von Neumann equivalent to a projection in $I$, the equivalence of (1) and (2) in Proposition 3.4 in \cite{Rordam:Stability} implies $I$ is stable.

Since $I$ is separable, has real rank zero, and $V(I)$ is almost unperforated, $I$ has the corona factorization property by Corollary 5.9 of \cite{OrtegoPereraRordam}.  As $\eta$ is unitizably full, $I$ is stable, and $I$ has the corona factorization property, $\eta$ is nuclearly absorbing by Theorem 2.6 in \cite{Gabe:AbsorbingExtensions}.

Since $\eta$ is weakly nuclear, $\eta$ defines an element $\langle \eta \rangle$ in $\Ext_\mathrm{nuc}(A, I)$.  We claim $\langle \eta \rangle = 0$.  As $A$ satisfies the UCT, $A$ is KK-equivalent to a nuclear C$^*$-algebra and hence the canonical map
\[ \begin{tikzcd} \Ext_{\mathrm{nuc}}(A, I) \arrow{r} & \Ext^{-1}(A, I) \end{tikzcd} \]
is an isomorphism.  Since $K_1(I) = 0$ and $K_0(I)$ is divisible, $\Ext_\mathbb{Z}^1(K_*(A), K_*(I)) = 0$ as divisible abelian groups are injective by Lemma 4.2 of Chapter XX in \cite{Lang:Algebra}.  Hence the UCT yields an isomorphism
\[ \begin{tikzcd} \Ext_{\mathrm{nuc}}(A, I) \arrow{r}{\cong} & \Hom_\mathbb{Z}(K_1(A), K_0(I)) \end{tikzcd} \]
sending the extension $\langle \eta \rangle$ to the index map $K_1(A) \rightarrow K_0(I)$ which is assumed to be the zero map.  So $\langle \eta \rangle = 0$.

Now, there is an extension $\eta'$ of $A$ by $I$ with a weakly nuclear, *-homomorphic splitting such that $\eta \oplus \eta'$ admits a weakly nuclear, *-homomorphic splitting.  Since $\eta$ is nuclearly absorbing, $\eta$ and $\eta \oplus \eta'$ are strongly unitarily equivalent.  The result follows.
\end{proof}

\section{The Trace-Kernel Ideal is Admissible}\label{sec:TraceKernelIdeal}

Let $\mathcal{Q}$ denote the universal UHF-algebra and let $\mathcal{R}$ denote the hyperfinite $\mathrm{II}_1$-factor.  Fix a free ultrapower $\omega$ on the natural numbers, let $\mathcal{Q}_\omega$ denote the norm ultrapower of $\mathcal{Q}$, and let $\mathcal{R}^\omega$ denote the tracial ultrapower of $\mathcal{R}$.  There is a unital inclusion $\mathcal{Q} \hookrightarrow \mathcal{R}$ with weak*-dense range.  By Theorem 3.3 in \cite{KirchbergRordam}, this inclusion induces a surjection $\pi : \mathcal{Q}_\omega \rightarrow \mathcal{R}^\omega$.  The ideal $J := \ker(\pi)$ of $\mathcal{Q}_\omega$ is called the \emph{trace-kernel ideal}.

\begin{theorem}\label{thm:TraceKernelIdeal}
The trace-kernel ideal $J$ is an admissible kernel.
\end{theorem}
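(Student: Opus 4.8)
The plan is to verify the six conditions in the definition of admissible kernel by working with the extension $0 \to J \to \mathcal{Q}_\omega \xrightarrow{\pi} \mathcal{R}^\omega \to 0$, exploiting that $\mathcal{Q}$ is UHF — so that $\mathcal{Q}_\omega$, and its matrix amplifications (again ultrapowers of UHF algebras), inherit the regularity of $\mathcal{Q}$ — and that $\mathcal{R}^\omega$ is a $\mathrm{II}_1$ factor, whose projection structure and $K$-theory are transparent ($K_0(\mathcal{R}^\omega) \cong \mathbb{R}$ via the trace, $K_1(\mathcal{R}^\omega) = 0$, and projections over $\mathcal{R}^\omega$ are classified by trace). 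Real rank zero and stable rank one pass to ultrapowers: given a bounded representing sequence, perturb each coordinate within $\varepsilon$ to an invertible element and then use continuous functional calculus — replacing the modulus $|b|$ by $|b| \vee \tfrac{\varepsilon}{2}$, or filling in the gap around $0$ in the self-adjoint case — to arrange that the inverses have norm at most $2/\varepsilon$; hence $\mathcal{Q}_\omega$ has real rank zero and stable rank one, and $J$, being a closed two-sided ideal, inherits both by standard permanence properties. For almost unperforation of $V(J)$: the monoid $V(\mathcal{Q}) \cong \mathbb{Q}_{\geq 0}$ is almost unperforated, an ultrafilter perturbation argument transfers this to $V(\mathcal{Q}_\omega)$, and since $M_\infty(J)$ is hereditary in $M_\infty(\mathcal{Q}_\omega)$ the submonoid $V(J) \subseteq V(\mathcal{Q}_\omega)$ is an order ideal, so almost unperforation descends.

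For the $K$-theory I would use the six-term exact sequence of the above extension. Every unitary in a von Neumann algebra is an exponential $e^{ih}$ with $\|h\| \leq \pi$, so $K_1(\mathcal{R}^\omega) = 0$; since in any UHF algebra every unitary is a norm-limit of unitaries from the matrix subalgebras, and the latter are exponentials $e^{ih}$ with $\|h\| \leq \pi$, the same ultrapower argument gives $K_1(\mathcal{Q}_\omega) = 0$. The crucial point is that $\pi_* \colon K_0(\mathcal{Q}_\omega) \to K_0(\mathcal{R}^\omega)$ is surjective: a projection in $M_n(\mathcal{R}^\omega)$ lifts to a bounded sequence of spectral projections in $M_n(\mathcal{R})$, each of which may be approximated in $\|\cdot\|_2$ by a projection in $M_n(\mathcal{Q})$ using Kaplansky density together with real rank zero of $M_n(\mathcal{Q})$, and these assemble to a projection in $M_n(\mathcal{Q}_\omega)$ lifting the original. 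The six-term sequence now yields $K_1(J) = \operatorname{coker}(\pi_*) = 0$ and $K_0(J) \cong \ker(\pi_*)$.

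It remains to see that $K_0(J)$ is divisible and that every projection in $J \otimes \mathcal{K}$ is Murray-von Neumann equivalent to a projection in $J$. Since $\mathcal{R}^\omega$ is a factor, $\ker(\pi_*)$ is exactly the set of differences $[P] - [Q]$ of classes of projections in matrix algebras over $\mathcal{Q}_\omega$ with $\tr_\omega(P) = \tr_\omega(Q)$. Given such an element and an integer $n \geq 1$, because every corner of $M_k(\mathcal{Q})$ cut down by a projection is again isomorphic to $\mathcal{Q}$ and hence unitally contains $M_n$, one can split $P$ and $Q$ coordinatewise into $n$ mutually orthogonal equivalent subprojections, obtaining projections $P', Q'$ with $n[P'] = [P]$, $n[Q'] = [Q]$ and $\tr_\omega(P') = \tr_\omega(Q')$; then $[P'] - [Q'] \in K_0(J)$ is an $n$-th root, so $K_0(J)$ is divisible. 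For the last condition, a projection in $J \otimes \mathcal{K}$ is equivalent to one in $M_n(J)$ for some $n$, and any projection $P$ in $M_n(J)$ maps to $0$ in $M_n(\mathcal{R}^\omega)$, so a representing sequence $P_\nu \in M_n(\mathcal{Q})$ satisfies $\lim_{\nu \to \omega} \tr(P_\nu) = 0$; hence $\tr(P_\nu) \leq 1/n$ for $\omega$-many $\nu$, and as projections in the UHF algebra $M_n(\mathcal{Q})$ are classified by their trace, each such $P_\nu$ is equivalent to a projection in the rank-one corner $e_{11} M_n(\mathcal{Q}) e_{11} \cong \mathcal{Q}$; reassembling these equivalences over $\omega$ exhibits $P$ as equivalent, in $M_n(J)$, to a projection in $J$.

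I expect the $K$-theoretic step to be the main obstacle. Surjectivity of $\pi_*$ on $K_0$ forces one to lift a projection from $\mathcal{R}$ to a projection in $\mathcal{Q}$ that is close in $2$-norm, and divisibility of $K_0(J)$ depends on the arithmetic of projections in $\mathcal{Q}$; establishing $K_1(\mathcal{Q}_\omega) = 0$ also requires a small input on unitaries in UHF algebras. By contrast, the rank conditions and the projection transfer inside $J \otimes \mathcal{K}$ are routine once the ultrafilter perturbation technique is in hand.
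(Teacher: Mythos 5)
Your proposal is correct and follows essentially the same route as the paper: verify each axiom of an admissible kernel using the six-term exact sequence of $0 \to J \to \mathcal{Q}_\omega \xrightarrow{\pi} \mathcal{R}^\omega \to 0$, the vanishing of $K_1$ for both ultrapowers, and the classification of projections in $\mathcal{Q}$ and in $\mathcal{R}^\omega$ by their traces. The only real difference is presentational: the paper packages the trace data into explicit isomorphisms $K_0(J) \cong G_0$ and $V(J) \cong G_0^+$ (the kernel of $\lim_\omega : \mathbb{Q}_\omega \to \mathbb{R}$ and its positive part) and reads off divisibility, almost unperforation, and the projection-transfer condition from these concrete groups, whereas you verify them by direct projection arithmetic in $\mathcal{Q}$.
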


We will prove this result is several steps.  The rest of this section is devoted to the proof.

\begin{proposition}
The C*-algebras $J$ and $\mathcal{Q}_\omega$ have real rank zero and stable rank one.
\end{proposition}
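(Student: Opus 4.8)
The plan is to prove the two algebras separately, starting with $\mathcal{Q}_\omega$ and then deducing (or directly arguing) the statement for the ideal $J$. First I would handle $\mathcal{Q}_\omega$. The algebra $\mathcal{Q}$ is a UHF-algebra, hence has real rank zero and stable rank one; moreover it has a tracial state (its unique trace) and is $\mathbb{Z}$-stable, so quite robust. For the norm ultrapower $\mathcal{Q}_\omega = \ell^\infty(\mathcal{Q})/c_0(\mathcal{Q})$ one argues directly: given a self-adjoint $x = (x_n)_\omega \in \mathcal{Q}_\omega$ and $\varepsilon > 0$, lift to a bounded sequence $(x_n)$ of self-adjoints in $\mathcal{Q}$, and use that each $\mathcal{Q}$ has real rank zero to perturb each $x_n$ within $\varepsilon$ to an invertible self-adjoint $y_n$ with $\|y_n^{-1}\|$ controlled — here one must be careful that the inverses stay uniformly bounded along $\omega$, which is where a quantitative version of real rank zero (or a small spectral-gap argument: push the spectrum off a neighborhood of $0$) is needed so that $(y_n)_\omega$ is genuinely invertible in $\mathcal{Q}_\omega$. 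The same scheme, applied to arbitrary (not necessarily self-adjoint) elements via their polar-type approximations, gives stable rank one for $\mathcal{Q}_\omega$; alternatively, stable rank one for an ultrapower follows from stable rank one of $\mathcal{Q}$ together with the fact that $\mathcal{Q}$ has strict comparison / is $\mathcal{Z}$-stable, but the elementary spectral argument is cleanest.

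Next I would treat $J = \ker(\pi)$. The cleanest route is to observe that real rank zero and stable rank one both pass to ideals: if $B \trianglelefteq C$ and $C$ has real rank zero then so does $B$, and likewise for stable rank one (this is standard — see Brown--Pedersen for real rank zero, and the ideal-permanence of stable rank one). Since $J$ is a (closed, two-sided) ideal in $\mathcal{Q}_\omega$ and we have just shown $\mathcal{Q}_\omega$ has real rank zero and stable rank one, it follows immediately that $J$ does too. One caveat: $J$ is non-unital, so "real rank zero" and "stable rank one" for $J$ mean the unitization $J^\dag$ has the property; but the ideal-permanence results are typically stated so as to cover the non-unital case directly (real rank zero of a non-unital algebra is equivalent to real rank zero of any algebra containing it as an ideal having real rank zero, and similarly for stable rank one via the relevant exact-sequence behavior), so no extra work is needed.

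The main obstacle I anticipate is the uniform control of the inverses in the ultrapower computation for $\mathcal{Q}_\omega$ — i.e. converting the qualitative statement "each $x_n$ is a norm-limit of invertibles" into a genuine invertible element of the ultrapower. For real rank zero this is handled by a spectral-projection argument: approximate the continuous functional calculus $f(x_n)$ where $f$ is a fixed continuous function vanishing near $0$ and equal to the identity away from $0$, so that the resulting elements are invertible with inverse norm bounded by a constant depending only on $\varepsilon$, uniformly in $n$. For stable rank one one reduces, via the polar decomposition in the ultrapower together with real rank zero and cancellation, to the self-adjoint case, or one invokes that $\mathcal{Q}_\omega$ has an approximate unit of projections and stable rank one is detected on such pieces. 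I would present the real-rank-zero estimate carefully and then note that stable rank one follows by the same uniform-spectral-gap technique applied to $|x| = (x^*x)^{1/2}$ to realize $x$ as a limit of invertibles in $\mathcal{Q}_\omega$.
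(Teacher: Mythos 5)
Your proposal is correct and follows essentially the same route as the paper: reduce to $\mathcal{Q}_\omega$ by the permanence of real rank zero and stable rank one under passing to ideals (Brown--Pedersen, Rieffel), and then produce invertible approximants in the ultrapower from the corresponding properties of $\mathcal{Q}$. The only cosmetic difference is that the paper works directly in $\mathcal{Q}_\omega$ via a unitary polar decomposition $x = u|x|$ (Loring's lemma, using stable rank one of $\mathcal{Q}$) and perturbs $|x|$ by $\varepsilon$, whereas you perturb componentwise with a uniform spectral-gap bound on the inverses --- a point you correctly identify as the one place requiring care.
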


\begin{proof}
As both properties are preserved by taking ideals (see Theorem 4.3 in \cite{Rieffel:StableRank} and Corollary 2.8 in \cite{BrownPedersen:RealRankZero}), it is enough to show $\mathcal{Q}_\omega$ has real rank zero and stable rank one.  Since $\mathcal{Q}$ has stable rank one, the proof of Lemma 19.2.2(i) in \cite{Loring:book} shows every element $x \in \mathcal{Q}_\omega$ has a unitary polar decomposition $u|x|$, and then $x$ is the limit of the invertible elements $u(|x| + \varepsilon)$ as $\varepsilon \rightarrow 0^+$.  When $x$ is self-adjoint, using that $\mathcal{Q}$ has real rank zero, the unitary $u$ constructed in the proof can be chosen to be self-adjoint and to commute with $|x|$ so that $u(|x| + \varepsilon)$ is invertible and self-adjoint for all $\varepsilon > 0$.
\end{proof}

We will now compute the $K$-theory of $J$.  Let $\ell^\infty(\mathbb{Q})$ denote the abelian group consisting of bounded sequences of rational numbers and define
\[ \mathbb{Q}_\omega = \ell^\infty(\mathbb{Q}) / \{ q \in \ell^\infty(\mathbb{Q}) : \{ n \in \mathbb{N} : q_n = 0 \} \in \omega \}. \]
The trace $\tr$ on $\mathcal{Q}$ induces an isomorphism $K_0(\mathcal{Q}_\omega) \rightarrow \mathbb{Q}_\omega$ as in Proposition 2.6 of \cite{RainoneSchafhauser} which is an ultraproduct formulation of (a special case of) the results in Section 3.2 of \cite{DadarlatEilers}. Let $\lim_\omega$ denote the map $\mathbb{Q}_\omega \rightarrow \mathbb{R}$ given by taking the limit of a sequence in $\ell^\infty(\mathbb{Q})$ along $\omega$ in $\mathbb{R}$.  Let $G_0$ denote the kernel of this map and note that
\[ G_0 \cong \{ q \in \ell^\infty(\mathbb{Q}) : \lim_{n \rightarrow \omega} q_n = 0 \} / \{ q \in \ell^\infty(\mathbb{Q}) : \{n \in \mathbb{N} : q_n = 0 \} \in \omega \}. \]
We let $\mathbb{Q}_\omega^+ \subseteq \mathbb{Q}_\omega$ denote the monoid consisting of elements of $\mathbb{Q}_\omega$ represented by bounded sequences in $\mathbb{Q}^+$ and let $G_0^+ = \mathbb{Q}_\omega^+ \cap G_0$.  Note that $G_0$ is divisible and $G_0^+$ is almost unperforated.

\begin{proposition}\label{prop:KTheoryComputation}
The algebras $J$, $\mathcal{Q}_\omega$, and $\mathcal{R}^\omega$ have trivial $K_1$-group and there is an isomorphism of extensions
\begin{equation}\label{eqn:KTheoryComputation}
 \begin{tikzcd}
   \hphantom{.}0 \arrow{r} & K_0(J) \arrow{r}{K_0(\iota)} \arrow{d}{K_0(\tr)_\omega} & K_0(\mathcal{Q}_\omega) \arrow{r}{K_0(\pi)} \arrow{d}{K_0(\tr)_\omega} & K_0(\mathcal{R}^\omega) \arrow{r} \arrow{d}{K_0(\tr^\omega)} & 0\hphantom{.} \\
   \hphantom{.}0 \arrow{r} & G_0 \arrow{r} & \mathbb{Q}_\omega \arrow{r}{\lim_{\omega}} & \mathbb{R} \arrow{r} & 0.
\end{tikzcd}
\end{equation}
Moreover, the vertical maps restrict to monoid isomorphisms $\theta_J : V(J) \rightarrow G_0^+$, $\theta_{\mathcal{Q}_\omega} : V(\mathcal{Q}_\omega) \rightarrow \mathbb{Q}_\omega^+$, and $\theta_{\mathcal{R}^\omega} : V(\mathcal{R}^\omega) \rightarrow \mathbb{R}^+$.
\end{proposition}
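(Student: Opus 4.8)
The plan is to build the bottom row of \eqref{eqn:KTheoryComputation} first, then identify it with the top row via the trace, and finally upgrade everything at the level of Murray--von Neumann monoids. First I would compute $K_*(\mathcal{Q}_\omega)$. The cited Proposition 2.6 of \cite{RainoneSchafhauser} gives the isomorphism $K_0(\tr)_\omega : K_0(\mathcal{Q}_\omega) \to \mathbb{Q}_\omega$ together with $K_1(\mathcal{Q}_\omega) = 0$; I would also extract from that argument (or reprove directly by the same ultraproduct bookkeeping) that it restricts to a monoid isomorphism $V(\mathcal{Q}_\omega) \cong \mathbb{Q}_\omega^+$, using that every projection in $\mathbb{M}_\infty(\mathcal{Q})$ is, up to Murray--von Neumann equivalence, determined by its (rational) trace and that the ultraproduct commutes with $V(-)$ in this UHF setting. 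For $\mathcal{R}^\omega$: the tracial ultrapower of the hyperfinite $\mathrm{II}_1$-factor is a $\mathrm{II}_1$-factor, so it has no nontrivial projections in $K_1$ and $K_0(\mathcal{R}^\omega) \cong \mathbb{R}$ with $V(\mathcal{R}^\omega) \cong \mathbb{R}^+$ via the trace; this is standard von Neumann algebra K-theory once one knows $\mathcal{R}^\omega$ is a finite factor.

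Next I would treat the extension $0 \to J \to \mathcal{Q}_\omega \xrightarrow{\pi} \mathcal{R}^\omega \to 0$ with the six-term sequence. Since $K_1(\mathcal{Q}_\omega) = 0$ and $K_1(\mathcal{R}^\omega) = 0$, the index map $K_1(\mathcal{R}^\omega) \to K_0(J)$ is automatically zero, and the exponential map $K_0(\mathcal{R}^\omega) \to K_1(J)$ lands in a group squeezed between $K_1(\mathcal{Q}_\omega) = 0$ on one side; more precisely the tail $K_1(\mathcal{Q}_\omega) \to K_1(\mathcal{R}^\omega) \to K_0(J) \to K_0(\mathcal{Q}_\omega)$ reads $0 \to 0 \to K_0(J) \to K_0(\mathcal{Q}_\omega)$, so $K_0(\iota)$ is injective, and the segment $K_1(\mathcal{R}^\omega) \to K_0(J) \to K_0(\mathcal{Q}_\omega) \to K_0(\mathcal{R}^\omega) \to K_1(J) \to K_1(\mathcal{Q}_\omega)$ becomes $0 \to K_0(J) \to \mathbb{Q}_\omega \to \mathbb{R} \to K_1(J) \to 0$. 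I would then argue $\pi$ induces the map $\lim_\omega$ on $K_0$: a projection in $\mathbb{M}_\infty(\mathcal{Q}_\omega)$ lifting to a bounded sequence of projections in $\mathbb{M}_\infty(\mathcal{Q})$ has $K_0(\tr)_\omega$-class the $\omega$-class of the rational trace sequence, and its image in $\mathcal{R}^\omega$ has trace the $\omega$-limit of those rationals; compatibility of $K_0(\tr)_\omega$ on $\mathcal{Q}_\omega$ with $K_0(\tr^\omega)$ on $\mathcal{R}^\omega$ under $\pi$ is exactly this observation, giving the commuting right-hand square. Since $\lim_\omega : \mathbb{Q}_\omega \to \mathbb{R}$ is visibly surjective (constant rational-approximation sequences), the cokernel vanishes and $K_1(J) = 0$; and $K_0(\iota)$ identifies $K_0(J)$ with $\ker(\lim_\omega) = G_0$ as described, giving the commuting left-hand square and the vertical isomorphism $K_0(\tr)_\omega|_{K_0(J)} : K_0(J) \xrightarrow{\cong} G_0$.

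Finally, for the monoid statement, I would restrict the already-established isomorphism $V(\mathcal{Q}_\omega) \cong \mathbb{Q}_\omega^+$ to $V(J)$. A projection $p \in \mathbb{M}_\infty(J)$ is a projection in $\mathbb{M}_\infty(\mathcal{Q}_\omega)$ mapping to $0$ in $\mathcal{R}^\omega$, i.e.\ with $\omega$-limit of its trace sequence equal to $0$, so $\theta_{\mathcal{Q}_\omega}(p) \in \mathbb{Q}_\omega^+ \cap G_0 = G_0^+$; conversely any element of $G_0^+$ is represented by a bounded sequence of positive rationals tending to $0$ along $\omega$, which lifts to a bounded sequence of projections in $\mathbb{M}_\infty(\mathcal{Q})$ whose ultraproduct is a projection in $\mathbb{M}_\infty(J)$, giving surjectivity; injectivity and additivity are inherited from $\theta_{\mathcal{Q}_\omega}$. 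The analogous restriction for $\mathcal{R}^\omega$ gives $\theta_{\mathcal{R}^\omega} : V(\mathcal{R}^\omega) \to \mathbb{R}^+$. I expect the main obstacle to be the bookkeeping that $V(-)$ and $K_0(-)$ genuinely commute with the (norm, resp.\ tracial) ultrapower in the way needed --- in particular lifting projections in matrix algebras over the ultrapower to sequences of projections and controlling Murray--von Neumann equivalence uniformly --- but this is precisely what \cite{RainoneSchafhauser} (building on \cite{DadarlatEilers}) supplies for $\mathcal{Q}_\omega$, and the finite-factor case for $\mathcal{R}^\omega$ is classical.
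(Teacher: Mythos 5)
Your proposal is correct and follows essentially the same route as the paper: quote the ultraproduct computation of $K_*(\mathcal{Q}_\omega)$, use that $\mathcal{R}^\omega$ is a $\mathrm{II}_1$-factor for the right-hand column, run the six-term exact sequence (with surjectivity of $\lim_\omega$) to get $K_1(J)=0$ and $K_0(J)\cong G_0$, and then handle the monoid statements by lifting trace data to sequences of projections and using cancellation. The only cosmetic differences are that the paper proves $K_1(\mathcal{Q}_\omega)=0$ by an explicit exponential/perturbation argument rather than deferring to the citation, and invokes the five lemma where you identify $K_0(J)$ directly with $\ker(\lim_\omega)$.
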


\begin{proof}
As noted above, $K_0(\tr)_\omega$ is an isomorphism.  Since $\mathcal{R}^\omega$ is a $\mathrm{II}_1$-factor, we have $K_0(\tr^\omega)$ is an isomorphism and $\mathcal{R}^\omega$ has trivial $K_1$-group.

A standard argument also shows $\mathcal{Q}_\omega$ has trivial $K_1$-group.  Indeed, given a unitary $u \in \mathbb{M}_k(\mathcal{Q}_\omega)$, there is an element $(x_n)_n \in \mathbb{M}_k(\ell^\infty(\mathcal{Q}))$ lifting $u$.  Then $\|x_n x_n^* - 1\| \rightarrow 0$ and $\|x_n^* x_n - 1\| \rightarrow 0$ along $\omega$.  Hence by the polar decomposition, there are unitaries $u_n \in \mathbb{M}_k(\mathcal{Q})$ such that $\|x_n - u_n \| \rightarrow 0$ along $\omega$.  Applying a similar perturbation argument, we may assume each $u_n$ belongs to a finite-dimensional subalgebra of $\mathbb{M}_k(\mathcal{Q})$, and hence we may write $u_n = e^{2\pi i h_n}$ for some self-adjoint $h_n \in \mathbb{M}_k(\mathcal{Q})$ with $0 \leq h_n \leq 1$.  The sequence $(h_n)_{n=1}^\infty$ defines a self-adjoint $h \in \mathbb{M}_k(\mathcal{Q}_\omega)$ with $u = e^{2\pi i h}$, and hence $u$ is homotopic to the identity in $\mathbb{M}_k(\mathcal{Q}_\omega)$.  This shows $K_1(\mathcal{Q}_\omega) = 0$.

Since $\lim_\omega$ is surjective, $K_0(\pi)$ is surjective.  As $K_0(\pi)$ is surjective and $K_1(\mathcal{Q}_\omega) = 0$, the six-term exact sequence in $K$-theory implies $K_1(J) =0$.  Now the five lemma implies $K_0(\tr)_\omega : K_0(J) \rightarrow G_0$ is an isomorphism.

Since $J$, $\mathcal{Q}_\omega$, and $\mathcal{R}^\omega$ have stable rank one, the injectivity of the vertical maps in \eqref{eqn:KTheoryComputation} imply the injectivity of $\theta_J$, $\theta_{\mathcal{Q}_\omega}$, and $\theta_{\mathcal{R}^\omega}$.  Since $\mathcal{R}^\omega$ is a $\mathrm{II}_1$-factor, for any $t \in \mathbb{R}^+$, there is a projection $p \in \mathbb{M}_\infty(\mathcal{R}^\omega)$ with $\mathrm{tr}^\omega(p) = t$, and hence $\theta_{\mathcal{R}^\omega}$ is surjective.

Given an element $t \in \mathbb{Q}_\omega^+$ represented by a bounded sequence $(t_n)_{n=1}^\infty \subseteq \mathbb{Q}^+$, let $d$ be an integer with $0 \leq t_n \leq d$ for all $n \geq 1$.  Then there is a sequence of projections $(p_n)_{n=1}^\infty \subseteq \mathbb{M}_d(\mathcal{Q})$ such that $\mathrm{tr}(p_n) = t_n$.  The sequence $(p_n)_{n=1}^\infty$ defines a projection $p \in \mathbb{M}_d(\mathcal{Q}_\omega)$ with $\theta_{\mathcal{Q}_\omega}([p]) = t$, so $\theta_{\mathcal{Q}_\omega}$ is surjective.

Now suppose $t \in G_0^+$.  There is an integer $d \geq 1$ and a projection $p \subseteq \mathbb{M}_d(\mathcal{Q}_\omega)$ with $\theta_{\mathcal{Q}_\omega}([p]) = t$.  Let $(p_n)_{n=1}^\infty \subseteq \mathbb{M}_d(\mathcal{Q})$ be a sequence of projections representing $p$.  As $t \in G_0$, we have $\lim_{n \rightarrow \omega} \mathrm{tr}(p_n) = \lim_{n \rightarrow \omega} t_n = 0$, and hence $p \in \mathbb{M}_d(J)$.  Also, as $\theta_{\mathcal{Q}_\omega}([p]) = t$, we have $\theta_J([p]) = t$, so $\theta_J$ is surjective.
\end{proof}

\begin{corollary}
The group $K_0(J)$ is divisible and $V(J)$ is almost unperforated.
\end{corollary}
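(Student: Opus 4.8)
The plan is to read everything off Proposition \ref{prop:KTheoryComputation}. That result supplies a group isomorphism $K_0(\tr)_\omega : K_0(J) \to G_0$ together with a monoid isomorphism $\theta_J : V(J) \to G_0^+$. Divisibility is an isomorphism invariant of groups, and almost unperforation is an isomorphism invariant of monoids (its statement refers only to the algebraic order, which any monoid isomorphism preserves). Hence it suffices to verify the two facts recorded in the remark preceding Proposition \ref{prop:KTheoryComputation}: that $G_0$ is divisible and that $G_0^+$ is almost unperforated.

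For divisibility, an element of $G_0$ is represented by some $(q_n) \in \ell^\infty(\mathbb{Q})$ with $\lim_{n \to \omega} q_n = 0$; for a nonzero integer $m$ the sequence $(q_n/m)$ is again bounded, rational, and has $\omega$\=/limit $0$, so it represents a class of $G_0$ whose $m$\=/fold multiple is the original one. Thus $G_0$, and therefore $K_0(J)$, is divisible --- this is just divisibility of $\mathbb{Q}$ pushed through the ultrapower and restricted to $\ker(\lim_\omega)$.

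For almost unperforation, the dictionary to establish first is: for $x, y \in G_0^+$ with representatives $(x_n), (y_n)$ consisting of nonnegative rationals with $\omega$\=/limit $0$, one has $x \le y$ in the algebraic order of $G_0^+$ exactly when $x_n \le y_n$ for all $n$ in some set of $\omega$. The forward implication is immediate from a witness $x + z = y$, and for the converse one sets $z_n := y_n - x_n$ on that set and $z_n := 0$ elsewhere, noting $z \in G_0^+$ (it is bounded, rational, nonnegative, with $\omega$\=/limit $0$) and $x + z = y$. Granting this, suppose $(k+1)x \le ky$ for an integer $k \ge 1$; passing to representatives gives $(k+1)x_n \le k y_n$ on some $S \in \omega$, hence $x_n \le \tfrac{k}{k+1} y_n \le y_n$ for $n \in S$, and so $x \le y$. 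Transporting along $\theta_J$, this shows $V(J)$ is almost unperforated.

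I do not expect a real obstacle: all the substance sits in Proposition \ref{prop:KTheoryComputation}, and what remains is the elementary order-arithmetic of $\mathbb{Q}^+$. The only point requiring a moment's attention is the translation above between the algebraic order on the abstract monoid $V(J) \cong G_0^+$ and the pointwise-on-an-$\omega$-large-set description of inequalities in $G_0^+$; once that is in place, both assertions reduce to one-line checks.
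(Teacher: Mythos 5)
Your proof is correct and follows the same route as the paper: both simply transport the two properties across the isomorphisms $K_0(J)\cong G_0$ and $V(J)\cong G_0^+$ from Proposition \ref{prop:KTheoryComputation}. The only difference is that you also spell out the elementary verification that $G_0$ is divisible and $G_0^+$ is almost unperforated, which the paper merely asserts in the remark preceding that proposition; your verification (including the translation of the algebraic order into pointwise inequalities on an $\omega$\-large set) is accurate.
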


\begin{proof}
By Proposition \ref{prop:KTheoryComputation}, $K_0(J) \cong G_0$ and $V(J) \cong G_0^+$.  Since $G_0$ is divisible and $G_0^+$ is almost unperforated, the result follows.
\end{proof}

\begin{proposition}
Every projection in $\mathbb{M}_\infty(J)$ is Murray-von Neumann equivalent to a projection in $J$.
\end{proposition}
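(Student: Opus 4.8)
The plan is to deduce the statement from Proposition~\ref{prop:KTheoryComputation}, by sharpening the surjectivity argument given there so that the witnessing projection lies in $J$ itself rather than merely in $\mathbb{M}_d(J)$. Fix a projection $p \in \mathbb{M}_\infty(J)$ and put $t := \theta_J([p]) \in G_0^+$. Since $\theta_J : V(J) \rightarrow G_0^+$ is an isomorphism by Proposition~\ref{prop:KTheoryComputation}, it is enough to exhibit a single projection $q \in J$ with $\theta_J([q]) = t$: then $[q] = [p]$ in $V(J)$, which by the very definition of $V(J)$ means that $p$ and $q$ are Murray--von Neumann equivalent in $\mathbb{M}_\infty(J)$, and so $p$ is equivalent to the projection $q$ lying in $J$.

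To construct such a $q$, I would represent $t$ by a bounded sequence $(t_n)_{n=1}^\infty$ of nonnegative rationals with $\lim_{n \rightarrow \omega} t_n = 0$; this is possible since $t \in G_0^+ = \mathbb{Q}_\omega^+ \cap G_0$ and $G_0$ is the kernel of $\lim_\omega$. Because $\lim_{n \rightarrow \omega} t_n = 0$, the set $\{ n : t_n \leq 1 \}$ belongs to $\omega$. On that set, choose a projection $q_n \in \mathcal{Q}$ with $\tr(q_n) = t_n$, which exists because $\mathcal{Q}$ contains a unital copy of $\mathbb{M}_b$ for every $b \geq 1$ and $t_n \in \mathbb{Q} \cap [0,1]$; off that set, set $q_n := 0$. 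The sequence $(q_n)_{n=1}^\infty$ is norm bounded and each term is a projection, so it defines a projection $q \in \mathcal{Q}_\omega$. Since $\tr(q_n) = t_n$ for $\omega$-almost every $n$, we have $\lim_{n \rightarrow \omega} \tr(q_n) = 0$, and exactly as in the proof of Proposition~\ref{prop:KTheoryComputation} this shows $q \in J$; moreover it shows $\theta_J([q])$ is the class of $(t_n)_n$, namely $t$.

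I do not anticipate a real obstacle here: the argument is the surjectivity half of Proposition~\ref{prop:KTheoryComputation} combined with the injectivity of $\theta_J$. The only point worth stressing is where the hypothesis is genuinely used. In $\mathcal{Q}_\omega$ itself the analogous assertion fails, since a projection in $\mathbb{M}_d(\mathcal{Q}_\omega)$ whose trace is close to $d$ cannot be moved into $\mathcal{Q}_\omega$; what rescues the situation for $J$ is precisely the condition $\lim_{n \rightarrow \omega} t_n = 0$ built into $G_0$, which forces $t_n \leq 1$ for $\omega$-almost all $n$ and hence lets the rank-$t_n$ projection of $\mathcal{Q}$ be placed in a single matrix coordinate. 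The verification that a bounded sequence of projections in $\mathcal{Q}$ defines a projection in the $C^*$-ultrapower is immediate and requires no perturbation, since the relations $q_n = q_n^* = q_n^2$ already hold on the nose.
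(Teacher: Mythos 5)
Your proposal is correct and takes essentially the same route as the paper: both arguments reduce to the injectivity of $\theta_J$ from Proposition~\ref{prop:KTheoryComputation} and build the witnessing projection $q \in J$ by choosing, for $\omega$-almost every $n$, a projection in $\mathcal{Q}$ of the prescribed trace in $[0,1]$ (using that the traces tend to $0$ along $\omega$) and setting the remaining coordinates to $0$. The only cosmetic difference is that the paper works directly with the sequence $(\tr(p_n))_n$ rather than first passing to the class $t = \theta_J([p])$.
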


\begin{proof}
Fix an integer $d \geq 1$ and a projection $p \in \mathbb{M}_d(J)$, and let $(p_n)_{n=1}^\infty \subseteq \mathbb{M}_d(\mathcal{Q})$ be a sequence of projections representing $p$.  As $p \in J$, we have $\lim_{n \rightarrow \omega} \mathrm{tr}(p_n) = 0$.  Now, the set $S = \{ n \in \mathbb{N} : 0 \leq \mathrm{tr}(p_n) \leq 1 \}$ is in $\omega$.  For $n \in S$, let $q_n \in \mathcal{Q}$ be a projection with $\mathrm{tr}(q_n) = \mathrm{tr}(p_n)$, and for $n \notin S$, let $q_n = 0$.  Then $(q_n)_{n=1}^\infty$ defines a projection in $q \in \mathcal{Q}_\omega$.  As $p \in \mathbb{M}_d(J)$, we have
\[ \lim_{n \rightarrow \omega} \mathrm{tr}(q_n) = \lim_{n \rightarrow \omega} \mathrm{tr}(p_n) = 0, \]
and hence $q \in J$.  Moreover, $\theta_J([p]) = \theta_J([q])$ in $G_0^+$, and hence $[p] = [q]$ as $\theta_J$ is injective by Proposition \ref{prop:KTheoryComputation}.  So $p$ and $q$ are Murray von-Neumann equivalent.
\end{proof}

This completes the proof of Theorem \ref{thm:TraceKernelIdeal}.

\section{A Separable Pullback Extension}\label{sec:SeparablePullback}

In this section, we will replace the extension
\[ \begin{tikzcd} \eta: \quad 0 \arrow{r} & J \arrow{r} & \mathcal{Q}_\omega \arrow{r} & \mathcal{R}^\omega \arrow{r} & 0 \hphantom{\eta: \quad} \end{tikzcd} \]
given in \eqref{eqn:PullBack} above with a separable extension and produce an extension of $A$ by a separable subalgebra of $J$ satisfying the conditions of Theorem \ref{thm:splitting}.

The following three results are a standard applications of Blackadar's notion of separable inheritability (see Section II.8.5 in \cite{BlackadarEncyclopedia}).  A property $\mathcal{P}$ of C$^*$-algebras is called \emph{separably inheritable} if it is preserved by sequential direct limits with injective connecting maps and for every C$^*$-algebra $A$ with property $\mathcal{P}$ and every separable subalgebra $A_0 \subseteq A$, there is a separable subalgebra $A_1 \subseteq A$ containing $A_0$ such that $A_1$ has property $\mathcal{P}$.

\begin{proposition}\label{prop:SeparableKernel}
The property of being an admissible kernel is separably inheritable.
\end{proposition}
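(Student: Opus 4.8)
The plan is to verify the two clauses in the definition of separable inheritability for the conjunction of properties defining an admissible kernel, namely: real rank zero, stable rank one, $V(I)$ almost unperforated, $K_0(I)$ divisible, $K_1(I) = 0$, and every projection in $I \otimes \mathcal{K}$ being Murray--von Neumann equivalent to a projection in $I$. First I would treat the limit clause. Real rank zero and stable rank one are well known to pass to sequential inductive limits with injective connecting maps (the relevant approximation properties are finitary and only improve in the limit); $K$-theory commutes with sequential direct limits, so divisibility of $K_0$ and vanishing of $K_1$ are inherited by limits, as is the statement that the order on $K_0$ coming from $V(I)$ is almost unperforated, since $V$ and its algebraic order are compatible with direct limits. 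The projection-lifting-to-$I$ condition is equivalent (using real rank zero and stable rank one, hence cancellation) to the map $V(I) \to V(I \otimes \mathcal{K})$ being surjective; since both sides commute with direct limits this is preserved. So the inductive limit clause reduces to a short checklist of standard permanence facts.

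The substantive work is the second clause: given an admissible kernel $I$ and a separable subalgebra $I_0 \subseteq I$, produce a separable $I_1$ with $I_0 \subseteq I_1 \subseteq I$ that is again an admissible kernel. I would use Blackadar's standard technique of building $I_1$ as the closure of an increasing union $\bigcup_n B_n$ of separable subalgebras, where at each stage one enlarges $B_n$ to $B_{n+1}$ so as to witness, for the countably many relevant elements of $B_n$, the defining approximation/lifting properties of $I$. Concretely: to capture real rank zero, for each self-adjoint $x$ in a countable dense subset of $B_n$ and each $\varepsilon$, adjoin an invertible self-adjoint element of $I$ within $\varepsilon$ of $x$ (working in $\widetilde{B_n}$ versus $\widetilde{I}$, matching the unitizations); similarly for stable rank one, adjoin invertibles approximating each element of each matrix amplification; for the $K_0$-divisibility and $V(I)$-unperforation and projection-lifting conditions, whenever a projection or a $K_0$-class relation is present in $B_n$ (or a matrix algebra over it), adjoin a projection in the appropriate matrix algebra over $I$ witnessing the divisibility equation, the comparison $x \le y$, or the equivalence to a projection in $I$ — together with the partial isometries implementing the relevant Murray--von Neumann equivalences. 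For $K_1(I) = 0$ one adjoins, for each unitary in a matrix algebra over $B_n$, a homotopy (or a self-adjoint logarithm as in the proof of Proposition \ref{prop:KTheoryComputation}) lying in the corresponding matrix algebra over $I$. Taking $I_1 = \overline{\bigcup_n B_n}$, a routine diagonal/density argument shows each defining property of an admissible kernel holds in $I_1$, and $I_1$ is separable since each $B_n$ is.

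One point requiring a little care, and the step I expect to be the main technical nuisance rather than a genuine obstacle, is the interaction with unitizations: real rank zero and stable rank one for a non-unital algebra are defined via the unitization, so at each stage one must approximate inside $\widetilde{B_n} \subseteq \widetilde{I}$ and ensure the extra elements adjoined still lie in $I$ (not merely in $\widetilde{I}$), and one must also arrange that $I_1$ has a countable approximate unit of projections — which follows a posteriori from real rank zero and $\sigma$-unitality, but is cleanest to build in directly by adjoining, at each stage, projections in $I$ that $\varepsilon$-dominate the chosen dense elements of $B_n$. The other mild subtlety is the projection-equivalence condition: matching $[p]$ for $p \in \mathbb{M}_\infty(I_1)$ with a projection in $I_1$ requires adjoining not just a projection $q \in I$ with $[p] = [q]$ in $V(I)$ but also a partial isometry in a large enough matrix algebra over $I$ implementing $p \sim q$; since $I$ has cancellation this is always available, and each such element is a single element to throw into the next stage. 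With these bookkeeping points handled, both clauses of separable inheritability hold and the proposition follows.
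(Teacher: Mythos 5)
Your proposal is correct and follows essentially the same route as the paper: the paper simply cites Blackadar (Paragraph II.8.5.5) for the separable inheritability of real rank zero, stable rank one, and $K_1(\cdot)=0$, asserts that the remaining properties admit ``very similar proofs,'' and then invokes Blackadar's result (Proposition II.8.5.3) that a countable meet of separably inheritable properties is separably inheritable. Your interleaved increasing-union construction, adjoining witnesses for each property at each stage, is exactly the content of those cited results written out in full, so the two arguments coincide in substance.
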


\begin{proof}
The properties $K_1(\cdot) = 0$, real rank zero, and stable rank one are separably inheritable by Paragraph II.8.5.5 in \cite{BlackadarEncyclopedia}.  Very similar proofs show the properties $V(\cdot)$ is almost unperforated, $K_0(\cdot)$ is divisible, and every projection in the stabilization of the algebra being Murray von-Neumann equivalent to a projection in the algebra are separably inheritable.  The result follows from Proposition II.8.5.3 in \cite{BlackadarEncyclopedia} which states that the meet of countably many separably inheritable properties is separably inheritable.
\end{proof}

\begin{proposition}\label{prop:SeparableExtension}
Consider an extension
\[ \begin{tikzcd} 0 \arrow{r} & I \arrow{r} & B \arrow{r}{\pi} & D \arrow{r} & 0 \end{tikzcd} \]
of C*-algebra such that $I$ is an admissible kernel and $B$ and $D$ are unital.  Given a separable, unital subalgebra $D_0 \subseteq D$, there is a separable, unital subalgebra $B_0 \subseteq B$ such that $\pi(B_0) = D_0$ and $B_0 \cap I$ is an admissible kernel.
\end{proposition}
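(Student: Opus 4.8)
The plan is to construct $B_0$ as a sequential direct limit of separable unital subalgebras of $B$, interleaving two kinds of enlargement: one to guarantee that the subalgebra surjects onto $D_0$, and one to guarantee that its intersection with $I$ stays an admissible kernel. The second enlargement is powered by Proposition \ref{prop:SeparableKernel}, which says that being an admissible kernel is separably inheritable.

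First I would set up the base case. Since $\pi$ is surjective and $D_0$ is separable, choose a countable dense subset $\{d_i\}$ of $D_0$ and lifts $a_i \in B$ with $\pi(a_i) = d_i$, and put $B_0^{(0)} = C^*(1_B, a_1, a_2, \dots)$. This is a separable unital subalgebra of $B$, and since the image of a $*$-homomorphism between C$^*$-algebras is closed, $\pi(B_0^{(0)})$ is the C$^*$-algebra generated by $1_D$ and $\{d_i\}$, i.e.\ $\pi(B_0^{(0)}) = D_0$. Next, the recursion: given a separable unital $B_0^{(n)} \subseteq B$ with $\pi(B_0^{(n)}) = D_0$, the ideal $B_0^{(n)} \cap I$ is a separable subalgebra of the admissible kernel $I$, so by Proposition \ref{prop:SeparableKernel} there is a separable admissible kernel $I_{n+1}$ with $B_0^{(n)} \cap I \subseteq I_{n+1} \subseteq I$; set $B_0^{(n+1)} = C^*(B_0^{(n)}, I_{n+1})$. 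Then $B_0^{(n+1)}$ is separable and unital, $\pi(B_0^{(n+1)}) = D_0$ because $I_{n+1} \subseteq \ker \pi$, and $I_{n+1} \subseteq B_0^{(n+1)} \cap I$. Finally put $B_0 = \overline{\bigcup_n B_0^{(n)}}$ and $I_\infty = \overline{\bigcup_n I_n}$. Then $B_0$ is separable and unital, $\pi(B_0) = D_0$ (each $B_0^{(n)}$ maps onto $D_0$ and $D_0$ is closed), and from the interleaving $I_n \subseteq B_0^{(n)} \cap I \subseteq I_{n+1}$ the systems $\{I_n\}$ and $\{B_0^{(n)} \cap I\}$ are cofinal in one another, so $\varinjlim_n (B_0^{(n)} \cap I) = I_\infty$. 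Exactness of direct limits of C$^*$-algebras applied to the sequence of extensions $0 \to B_0^{(n)} \cap I \to B_0^{(n)} \to D_0 \to 0$ (with inclusions as connecting maps) identifies the kernel of $\pi|_{B_0} : B_0 \to D_0$ with $I_\infty$, i.e.\ $B_0 \cap I = I_\infty$; and since $I_\infty$ is a sequential direct limit with injective connecting maps of admissible kernels, Proposition \ref{prop:SeparableKernel} gives that $I_\infty = B_0 \cap I$ is an admissible kernel.

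The step I expect to require the most care is the identification $B_0 \cap I = I_\infty$: a priori $B_0 \cap I$ could be strictly larger than $\overline{\bigcup_n (B_0^{(n)} \cap I)}$, and the reason it is not is precisely exactness of C$^*$-algebraic direct limits (equivalently, $B_0 / I_\infty \cong \varinjlim_n B_0^{(n)}/(B_0^{(n)} \cap I) = \varinjlim_n D_0 = D_0$, so $\ker(\pi|_{B_0}) = I_\infty$). This is also exactly where the alternation of the two enlargement procedures is needed, since it forces the intersections $B_0^{(n)} \cap I$ to be cofinal with the chosen admissible kernels $I_n$.
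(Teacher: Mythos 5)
Your proposal is correct and follows essentially the same route as the paper's proof: interleave enlargements that restore surjectivity onto $D_0$ with enlargements of the intersection with $I$ to separable admissible kernels via Proposition \ref{prop:SeparableKernel}, then take the closed union and use the direct-limit half of separable inheritability. Your explicit justification of the identification $B_0 \cap I = \overline{\bigcup_n I_n}$ (via exactness of direct limits and the cofinality forced by the interleaving $I_n \subseteq B_0^{(n)} \cap I \subseteq I_{n+1}$) is a point the paper passes over in one sentence, and it is handled correctly.
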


\begin{proof}
Fix a countable dense set $T \subseteq D_0$ and let $S \subseteq B$ be a countable set such that $\pi(S) = T$.  Let $B_1$ denote the unital subalgebra of $B$ generated by $S$ and note that $\pi(B_1) = D_0$ since *-homomorphisms have closed range.  By Proposition \ref{prop:SeparableKernel}, there is a non-zero, separable admissible kernel $I_1 \subseteq I$ such that $B_1 \cap I \subseteq I_1$.  Let $B_2$ denote the subalgebra of $B$ generated by $B_1$ and $I_1$ and choose again a separable admissible kernel $I_2 \subseteq I$ with $B_2 \cap I \subseteq I_2$.  Continuing inductively, we have increasing an sequence $(I_n)_n$ of separable admissible kernels contained in $I$ and an increasing sequence $(B_n)_n$ of separable, unital subalgebras of $B$ such that $\pi(B_n) = D_0$ and $B_n \cap I \subseteq I_n \subseteq B_{n+1} \cap I$ for all $n \geq 1$.

Let $B_0$ denote the closed union of the $B_n$ and note that $\pi(B_0) = D_0$.  Moreover, $I \cap B_0$ is the closed union of the algebras $I_n$ and since each $I_n$ is an admissible kernel, so is $I \cap B_0$ by Proposition \ref{prop:SeparableKernel}.
\end{proof}

\begin{proposition}\label{prop:SeparableQuotient}
If $A$ is a separable C*-algebra and $\varphi : A \rightarrow \mathcal{R}^\omega$ is a nuclear *-homomorphism, then there is a separable, unital C*-subalgebra $R_0 \subseteq \mathcal{R}^\omega$ such that $R_0$ is simple, $K_1(R_0) = 0$, $\varphi(A) \subseteq R_0$, and $\varphi$ is nuclear as a map $A \rightarrow R_0$.
\end{proposition}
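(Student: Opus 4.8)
The plan is to construct $R_0$ as the norm closure of an increasing union $\bigcup_n B_n$ of separable, unital C$^*$-subalgebras of $\mathcal{R}^\omega$, built by a standard interleaving argument so that the limit algebra (i) contains $\varphi(A)$ and receives $\varphi$ as a nuclear map, (ii) is simple, and (iii) has trivial $K_1$-group. The ingredients I would use are: $\mathcal{R}^\omega$ is a $\mathrm{II}_1$-factor, hence a von Neumann algebra of real rank zero; $K_1(\mathcal{R}^\omega) = 0$ by Proposition \ref{prop:KTheoryComputation}; real rank zero is separably inheritable; and a C$^*$-subalgebra of $\mathcal{R}^\omega$ containing the unit of $\mathcal{R}^\omega$ is inverse-closed.

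Property (i) is secured in the base step. Since $A$ is separable and $\varphi$ is nuclear, there are sequences of completely positive, contractive maps $\psi_m : A \rightarrow \mathbb{M}_{n_m}$ and $\rho_m : \mathbb{M}_{n_m} \rightarrow \mathcal{R}^\omega$ with $\rho_m \circ \psi_m \rightarrow \varphi$ pointwise. I let $B_0$ be a separable, unital subalgebra of $\mathcal{R}^\omega$ containing $\varphi(A)$ together with all the images $\rho_m(e^{(m)}_{ij})$ of the matrix units of $\mathbb{M}_{n_m}$. Because a completely positive map out of a matrix algebra has range in the linear span of the images of the matrix units, each $\rho_m$ corestricts to a completely positive, contractive map into $B_0$; hence $\varphi(A) \subseteq B_0 \subseteq R_0$ and $\varphi : A \rightarrow R_0$ is nuclear, regardless of the later steps.

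At stage $n$ I would enlarge $B_n$ to a separable, unital $B_{n+1} \subseteq \mathcal{R}^\omega$ by adjoining three countable families manufactured from $B_n$. To force real rank zero on $R_0$, invoke separable inheritability to absorb a separable, real-rank-zero C$^*$-algebra lying between $B_n$ and $\mathcal{R}^\omega$. For simplicity: for each $q$ in a countable dense set of nonzero projections of $B_n$, use that $\mathcal{R}^\omega$ is a $\mathrm{II}_1$-factor to tile its unit by finitely many projections subequivalent to $q$, producing partial isometries $v_1, \dots, v_m \in \mathcal{R}^\omega$ with $\sum_j v_j^* q v_j = 1$, and adjoin the $v_j$. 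For $K_1$: for each $u$ in a countable dense set of $\bigsqcup_{k \geq 1} U(\mathbb{M}_k(B_n))$, use that $\mathbb{M}_k(\mathcal{R}^\omega)$ is a von Neumann algebra to choose a self-adjoint logarithm $h = -i \log u$ with $\|h\| \leq \pi$, and adjoin the entries of $h$. Then $R_0 := \overline{\bigcup_n B_n}$ is a separable, unital C$^*$-algebra of real rank zero.

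Reading the two remaining properties off the limit is the step I expect to require the most care, since only dense sets were treated at finite stages and the relations must be promoted to exact statements about all of $R_0$. For simplicity: any nonzero projection of $R_0$ is arbitrarily close to a projection of some $B_n$, hence unitarily equivalent inside $R_0$ (using inverse-closedness) to some $q$ for which $\sum_j v_j^* q v_j = 1$ was arranged; conjugating, every nonzero projection of $R_0$ generates $R_0$ as a closed ideal. Since $R_0$ has real rank zero, every nonzero positive $a \in R_0$ satisfies $q a q \geq \delta q$ for some nonzero projection $q \in R_0$ and some $\delta > 0$, so, choosing $y_j \in R_0$ with $\sum_j y_j^* q y_j = 1$, the ideal generated by $a$ contains $\sum_j (q y_j)^* a (q y_j) \geq \delta 1$ and hence equals $R_0$; thus $R_0$ is simple. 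For $K_1(R_0) = 0$: any unitary of $\mathbb{M}_k(B_n)$ differs from a nearby dense one by a unitary with $-1$ outside its spectrum, hence by an exponential from $\mathbb{M}_k(B_n)$, while the dense unitary is an exponential from $\mathbb{M}_k(B_{n+1})$; so every unitary of $\mathbb{M}_k(B_n)$ is null-homotopic in $\mathbb{M}_k(B_{n+1})$, all connecting maps $K_1(B_n) \rightarrow K_1(B_{n+1})$ vanish, and continuity of $K_1$ gives $K_1(R_0) = \varinjlim_n K_1(B_n) = 0$. Combining the three properties finishes the proof.
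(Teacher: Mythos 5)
Your argument is correct, and its first half is exactly the paper's: the paper also generates a separable unital $R_1$ from the images $\rho_n(\mathbb{M}_{k(n)})$ of the matrix algebras witnessing nuclearity, so that $\varphi$ is nuclear into $R_1$ and remains nuclear into any larger subalgebra. Where you diverge is the second half: the paper simply cites Blackadar's results that simplicity and $K_1(\cdot)=0$ are separably inheritable (Paragraph II.8.5.5 and Theorem II.8.5.6 of \cite{BlackadarEncyclopedia}) and takes $R_0 \supseteq R_1$ in one step, whereas you re-prove these inheritability statements by an explicit interleaving. Your $K_1$ argument via logarithms in the von Neumann algebra $\mathbb{M}_k(\mathcal{R}^\omega)$ is essentially the standard proof of that inheritability fact. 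Your simplicity argument is genuinely different from the general one: it exploits the $\mathrm{II}_1$-factor structure (tiling the unit by projections subequivalent to a given one) and requires importing real rank zero into $R_0$ so that positive elements dominate scalar multiples of projections on corners; this works, but it buys nothing over the general separable-inheritability of simplicity, which needs neither hypothesis, and it makes $R_0$ carry a property (real rank zero) that the proposition never asks for. The minor appeal to inverse-closedness of unital subalgebras of $\mathcal{R}^\omega$ is unnecessary, since the unitary conjugating two nearby projections of $R_0$ can already be manufactured inside $R_0$.
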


\begin{proof}
As $\varphi$ is nuclear, there are completely positive, contractive maps $\theta_n : A \rightarrow \mathbb{M}_{k(n)}$ and $\rho_n : \mathbb{M}_{k(n)} \rightarrow \mathcal{R}^\omega$ such that $\rho_n \circ \theta_n : A \rightarrow \mathcal{R}^\omega$ converge point norm to $\varphi$.  Let $R_1$ denote the unital C$^*$-subalgebra of $\mathcal{R}^\omega$ generated by the subspaces $\rho_n(\mathbb{M}_{k(n)})$ and note that $R_1$ is separable.  By construction, $\varphi(A) \subseteq R_1$ and $\varphi$ is nuclear as a map $A \rightarrow R_1$.

As $\mathcal{R}^\omega$ is $\mathrm{II}_1$-factor, $\mathcal{R}^\omega$ is simple and $K_1(\mathcal{R}^\omega) =0$.  Since simplicity and $K_1(\cdot) = 0$ are separably inheritable properties (see Paragraph II.8.5.5 and Theorem II.8.5.6 in \cite{BlackadarEncyclopedia}), there is a separable, simple, unital C$^*$-subalgebra $R_0 \subseteq \mathcal{R}^\omega$ containing $R_1$ such that $K_0(R_0) = 0$.  As $\varphi$ is nuclear as a map $A \rightarrow R_1$ and $R_1 \subseteq R_0$, $\varphi$ is nuclear as a map $A \rightarrow R_0$.
\end{proof}

Fix a separable C$^*$-algebra $A$ and a faithful, nuclear *-homomorphism $\varphi : A \rightarrow \mathcal{R}^\omega$.  Choose $R_0$ as in Proposition \ref{prop:SeparableQuotient}.  By Proposition \ref{prop:SeparableExtension} and Theorem \ref{thm:TraceKernelIdeal}, there is a separable, unital subalgebra $Q_0 \subseteq \mathcal{Q}_\omega$ such that $J_0 := Q_0 \cap J$ is an admissible kernel and $\pi(Q_0) = R_0$.  Consider the pullback diagram
\begin{equation}\label{eqn:SeparablePullback}
 \begin{tikzcd}
\varphi_0^*\eta_0: \quad 0 \arrow{r} & J_0 \arrow{r}{\tilde{\iota}_0} \arrow[-, double equal sign distance]{d} & E_0 \arrow{r}{\tilde{\pi}_0} \arrow{d}{\tilde{\varphi}_0} & A \arrow{r} \arrow{d}{\varphi_0} & 0 \hphantom{\varphi_0^*\eta_0: \quad} \\
\hphantom{\varphi^*}\eta_0: \quad 0 \arrow{r} & J_0 \arrow{r}{\iota_0} & Q_0 \arrow{r}{\pi_0} & R_0 \arrow{r} & 0 \hphantom{\varphi_0^*\eta_0: \quad}
\end{tikzcd}
\end{equation}
where $\pi_0$ is the restriction of $\pi : \mathcal{Q}_\omega \rightarrow \mathcal{R}^\omega$ to $Q_0$ and $\varphi_0$ denotes the morphism $\varphi : A \rightarrow \mathcal{R}^\omega$ viewed as a map into $R_0$.  Explicitly,
\[ E_0 := \{ (a, q) \in A \oplus Q_0 : \varphi_0(a) = \pi_0(q) \}, \]
$\tilde{\pi}_0$ and $\tilde{\varphi}_0$ are the projection maps on the first and second coordinates, respectively, $\iota_0$ is the inclusion, and $\tilde{\iota}_0$ is the inclusion into the second component of $E_0$.

\begin{theorem}\label{thm:PullbackExtension}
With the notation above, assume either $A$ is non-unital or $A$ is unital but $\varphi(1) \neq 1$.  Then the extension $\varphi_0^*\eta_0$ in \eqref{eqn:SeparablePullback} satisfies the hypothesis of Theorem \ref{thm:splitting}.  Hence if $A$ satisfies the UCT, then $\varphi_0^*\eta_0$ admits a weakly nuclear, *-homomorphic splitting.
\end{theorem}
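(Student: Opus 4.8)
The plan is to verify that $\varphi_0^*\eta_0$ satisfies each hypothesis of Theorem~\ref{thm:splitting} — separability of $A$ and $J_0$, weak nuclearity, unitizable fullness, $J_0$ being an admissible kernel, and triviality of the index map $K_1(A)\to K_0(J_0)$ — and then to invoke Theorem~\ref{thm:splitting}, which (once $A$ satisfies the UCT) produces the weakly nuclear $*$-homomorphic splitting. Two of the conditions are immediate from the construction in \eqref{eqn:SeparablePullback}: $A$ is separable by hypothesis, and $J_0 = Q_0 \cap J$ was chosen, via Proposition~\ref{prop:SeparableExtension} and Theorem~\ref{thm:TraceKernelIdeal}, to be a separable admissible kernel. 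We may assume $J_0 \neq 0$, since otherwise $\tilde\pi_0 \colon E_0 \to A$ is an isomorphism and the extension splits trivially.

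For the index map I would use naturality of the six-term exact sequence. The vertical maps in \eqref{eqn:SeparablePullback} form a morphism of extensions $\varphi_0^*\eta_0 \to \eta_0$ which is the identity on $J_0$, so the index map of $\varphi_0^*\eta_0$ equals $\partial_{\eta_0} \circ K_1(\varphi_0)$, where $\partial_{\eta_0} \colon K_1(R_0) \to K_0(J_0)$ is the index map of $\eta_0$. Since $K_1(R_0) = 0$ by Proposition~\ref{prop:SeparableQuotient}, this composite vanishes.

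For weak nuclearity the essential point is that $\varphi_0 \colon A \to R_0$ is nuclear (Proposition~\ref{prop:SeparableQuotient}). By the lifting theorem for nuclear completely positive maps, a nuclear c.p.c.\ map out of a separable C$^*$-algebra lifts through any surjective $*$-homomorphism to a nuclear c.p.c.\ map; applying this to $\pi_0 \colon Q_0 \to R_0$ gives a nuclear c.p.c.\ $\psi \colon A \to Q_0$ with $\pi_0 \circ \psi = \varphi_0$. Then $\sigma \colon A \to E_0$, $\sigma(a) = (a, \psi(a))$, is a c.p.c.\ splitting of $\varphi_0^*\eta_0$, and for $x \in J_0$ the map $a \mapsto x\sigma(a)x^* = (0, x\psi(a)x^*)$ is the composition of the nuclear map $\psi$ with the completely positive map $y \mapsto xyx^*$, hence nuclear; so $\sigma$ is a weakly nuclear splitting.

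The hard part will be unitizable fullness, and this is where faithfulness of $\varphi$ and the hypothesis on $\varphi(1)$ are used. The Busby invariant of $\varphi_0^*\eta_0$ is $\beta_{\eta_0} \circ \varphi_0 \colon A \to M(J_0)/J_0$, with $\beta_{\eta_0}$ the Busby invariant of $\eta_0$. Since $Q_0$ is unital and contains $J_0$ as an ideal, its unit is the unit of $M(J_0)$, so $\beta_{\eta_0}$ is unital; since $J_0$ is stable (as shown inside the proof of Theorem~\ref{thm:splitting}), hence non-unital, $M(J_0)/J_0 \neq 0$, so $\beta_{\eta_0} \neq 0$, and as $R_0$ is simple (Proposition~\ref{prop:SeparableQuotient}) it is injective. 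It follows that $\beta_{\eta_0}$ carries every non-zero element of $R_0$ to a full element of $M(J_0)/J_0$, as the closed ideal it generates contains $\beta_{\eta_0}(1_{R_0}) = 1$. So unitizable fullness reduces to: for each non-zero $a + \lambda \in A^\dag$, the element $\varphi_0(a) + \lambda 1_{R_0} \in R_0$ is non-zero. For $\lambda = 0$ this is faithfulness of $\varphi$; for $\lambda \neq 0$ it says $\varphi_0(a) \notin \mathbb{C}^{\times} 1_{R_0}$ whenever $a \neq 0$, and I would prove this by a short computation: if $\varphi_0(a) = c\,1_{R_0}$ with $c \neq 0$, then injectivity of $\varphi_0$ forces $c^{-1}a$ to be a projection $p \in A$ with $\varphi_0(p) = 1_{R_0}$, and injectivity again forces $p$ to be a unit for $A$ — impossible when $A$ is non-unital, and forcing $p = 1_A$ and $\varphi(1) = 1$ when $A$ is unital, against the hypothesis. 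Hence $\varphi_0^*\eta_0$ is unitizably full. This establishes all hypotheses of Theorem~\ref{thm:splitting} except the UCT for $A$; when $A$ additionally satisfies the UCT, Theorem~\ref{thm:splitting} then provides the weakly nuclear $*$-homomorphic splitting $A \to E_0$.
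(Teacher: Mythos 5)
Your proposal is correct and follows essentially the same route as the paper's proof: the index map is killed by naturality through $K_1(R_0)=0$, the weakly nuclear splitting comes from a nuclear c.p.c.\ lift of $\varphi_0$ through $\pi_0$ fed into the pullback, and unitizable fullness is reduced via the identity $\tilde\beta^\dag=\beta_{\eta_0}\circ\varphi_0^\dag$ to the faithfulness of $\varphi_0^\dag$ together with simplicity of $R_0$ and unitality of $\beta_{\eta_0}$. Your explicit verification that $\varphi_0(a)\neq -\lambda 1_{R_0}$ for $\lambda\neq 0$ (forcing a unit in $A$ mapped to $1$) just fills in a step the paper asserts in one line.
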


\begin{proof}
By the naturality of the index map, the index map $K_1(A) \rightarrow K_0(J_0)$ associated to the extension $\varphi_0^*\eta_0$ factors through $K_1(R_0) = 0$ and hence is the zero map.

To see the extension is unitizably full, define $\varphi_0^\dag : A^\dag \rightarrow \mathcal{R}^\omega$ by $\varphi_0^\dag(a + \lambda 1) = \varphi_0(a) + \lambda 1$ and note that $\varphi_0^\dag$ is faithful as $\varphi$ (and hence $\varphi_0$) is faithful and non-unital. If $\beta : R_0 \rightarrow M(J_0) /J_0$ denotes the Busby invariant of $\eta_0$ and $\tilde{\beta}^\dag : A^\dag \rightarrow M(J_0)/J_0$ denotes the Busby invariant of the unitization of $\varphi_0^*\eta_0$, then $\tilde{\beta}^\dag = \beta \circ \varphi_0^\dag$.  Since $\varphi_0^\dag$ is faithful, it suffices to show $\beta$ is full, but this is immediate since $R_0$ is simple and $\beta$ is unital.

Finally, we show $\varphi_0^*\eta_0$ admits a weakly nuclear splitting $\sigma : A \rightarrow E_0$.  As $\varphi_0$ is nuclear, the Choi-Effros Lifting Theorem implies there is a nuclear map $\tilde{\varphi}_0 : A \rightarrow Q_0$ such that $\pi_0 \circ \tilde{\varphi}_0 = \varphi_0$. Define $\sigma : A \rightarrow E$ by $\sigma(a) = (a, \tilde{\varphi}_0(a))$ and note that $\tilde{\pi}_0(\sigma(a)) = a$ for all $a \in A$.  Given $x \in J_0$, we have $\tilde{\iota}_0(x) \sigma(a) \tilde{\iota}_0(x)^* = (0, x \tilde{\varphi}(a) x^*)$ for all $a \in A$.  As $\tilde{\varphi}_0$ is nuclear, so is $\tilde{\iota}_0(x) \sigma(\cdot) \tilde{\iota}_0(x)^*$ for all $x \in J_0$ and hence $\varphi_0^*\eta_0$ is weakly nuclear.
\end{proof}

\section{The Proofs of Theorems \ref{thm:GabeTWW} and \ref{thm:LiftingTheorem}}\label{sec:MainResult}

\begin{proof}[\textbf{Proof of Theorem \ref{thm:LiftingTheorem}}]
Assume first either $A$ is not unital or $A$ is unital and $\varphi(1) \neq 1$.  Consider the pullback extension $\varphi_0^*\eta_0$ in \eqref{eqn:SeparablePullback} above.  By Theorem \ref{thm:PullbackExtension}, the extension $\varphi_0^*\eta_0$ admits a weakly nuclear *-homomorphic splitting $\sigma : A \rightarrow E_0$.  Define $\psi$ as the composition
\[ \begin{tikzcd} A \arrow{r}{\sigma} & E_0 \arrow{r}{\tilde{\varphi}_0} & Q_0 \arrow[hook]{r} & \mathcal{Q}_\omega \end{tikzcd} \]
and note that $\pi \circ \psi = \varphi$.

We now work to show the $\psi$ is nuclear; in fact, we will show $\psi_0 := \tilde{\varphi}_0 \circ \sigma$ is nuclear.  It suffices to show that for any C$^*$-algebra $D$, the map
\[ \psi_0 \otimes \mathrm{id}_D : A \otimes_\mathrm{max} D \rightarrow Q_0 \otimes_\mathrm{max} D \]
factors through $A \otimes_\mathrm{min} D$ by Corollary 3.8.8 in \cite{BrownOzawa}.  Let $x$ be in the kernel of the canonical quotient map $A \otimes_\mathrm{max} D \rightarrow A \otimes_\mathrm{min} D$ and let $y = (\psi_0 \otimes \mathrm{id}_D)(x)$.  It suffices to show $y = 0$.

Let $\lambda_0 : Q_0 \rightarrow M(J_0)$ denote the *-homomorphism given by left multiplication.  Then $\lambda_0 \circ \psi_0$ is weakly nuclear and hence is nuclear by Proposition 3.2 in \cite{Gabe:LiftingTheorems} since $A$ is exact.  Therefore,
\[ (\lambda_0 \otimes \mathrm{id}_D)(y) = (\lambda_0 \circ \psi_0 \otimes \mathrm{id}_D)(x) = 0 \]
by Corollary 3.8.8 in \cite{BrownOzawa}.  As $\pi_0 \circ \psi_0 = \varphi_0$ is nuclear, we also have
\[ (\pi_0 \otimes \mathrm{id}_D)(y) = (\pi_0 \circ \psi_0 \otimes \mathrm{id}_D)(x) = 0 \]
by Corollary 3.8.8 in \cite{BrownOzawa}.  As maximal tensor products preserve exact sequences, there is a commuting diagram
\[ \begin{tikzcd}
0 \arrow{r} & J_0 \otimes_\mathrm{max} D \arrow{r}{\iota_0 \otimes \mathrm{id}_D} \arrow[equals]{d} & Q_0 \otimes_\mathrm{max} D \arrow{r}{\pi_0 \otimes \mathrm{id}_D} \arrow{d}{\lambda_0 \otimes \mathrm{id}_D} & R_0 \otimes_\mathrm{max} D \arrow{r} \arrow{d} & 0 \\
0 \arrow{r} & J_0 \otimes_\mathrm{max} D \arrow{r} & M(J_0) \otimes_\mathrm{max} D \arrow{r} & M(J_0) / J_0 \otimes_\mathrm{max} D \arrow{r} & 0
\end{tikzcd} \]
with exact rows.  Since $(\pi_0 \otimes \mathrm{id}_D)(y) = 0$ and $(\lambda_0 \otimes \mathrm{id}_D)(y) = 0$, we have $y = 0$ as claimed.  Hence $\psi$ is nuclear which completes the proof when either $A$ is non-unital or $\varphi$ does not preserve the unit.

Now assume $A$ and $\varphi$ are unital.  We first show there is a unital, nuclear *-homomorphism $\psi' : A \rightarrow \mathcal{Q}_\omega$ with $\tr_\omega \circ \psi' = \tr^\omega \circ \varphi$.  Let $\varphi_1$ denote the composition
\[ \begin{tikzcd} A \arrow{r}{\varphi} & \mathcal{R}^\omega \arrow[hook]{r} & \mathbb{M}_2(\mathcal{R}^\omega) \arrow{r}{\cong} & (\mathbb{M}_2(\mathcal{R}))^\omega \arrow{r}{\cong} & \mathcal{R}^\omega \end{tikzcd} \]
where the second map is the (non-unital) embedding into the upper left corner and note that $\varphi_1$ is faithful, nuclear, and non-unital.  Hence by the first part of the proof, there is a nuclear *-homomorphism $\psi_1': A \rightarrow \mathcal{Q}_\omega$ such that $\pi \circ \psi_1' = \varphi_1$.  Note that
\[ \mathrm{tr}_\omega \circ \psi_1' = \mathrm{tr}^\omega \circ \pi \circ \psi_1' = \mathrm{tr}^\omega \circ \varphi_1 = \frac12 \mathrm{tr}^\omega \circ \varphi. \]
By the equivalence of (b) and (c) in Proposition 3.4(ii) of \cite{Gabe:TWW} applied to the trace $\mathrm{tr}^\omega \circ \varphi$ on $A$, there is a unital, nuclear *-homomorphism $\psi' : A \rightarrow \mathcal{Q}_\omega$ such that $\mathrm{tr}_\omega \circ \psi' = \mathrm{tr}^\omega \circ \varphi$.

Let $\tau = \mathrm{tr}^\omega \circ \varphi$.  Since $\varphi$ is nuclear, $\varphi$ has a completely positive lift $A \rightarrow \ell^\infty(\mathcal{R})$ by the Choi-Effros Lifting Theorem and hence $\tau$ is amenable.  As $A$ is exact, $A$ is locally reflexive (see (11) of Section 4 in \cite{Kirchberg:SubalgebrasOfCAR}), and hence $\tau$ is uniformly amenable by Theorem 4.3.3 in \cite{BrownQDTraces}.  Now by the equivalence of (1) and (5) of Theorem 3.3.2 in \cite{BrownOzawa}, $\pi_\tau(A)''$ is hyperfinite, where $\pi_\tau$ denotes the GNS representation of $A$ corresponding to the tracial state $\tau$.

Note that the morphisms $\varphi, \pi \circ \psi' : A \rightarrow \mathcal{R}^\omega$ extend to normal *-homomorphisms $\bar{\varphi}, \bar{\psi}' : \pi_\tau(A)'' \rightarrow \mathcal{R}^\omega$ and $\mathrm{tr}^\omega \circ \bar{\varphi} = \mathrm{tr}^\omega \circ \bar{\psi}'$.  By the classification of normal *-homomorphisms from separable hyperfinite von Neumann algebras to $\mathrm{II}_1$-factors (see the proof of Proposition 2.1 in \cite{CuipercaGiordanoNgNiu}), $\bar{\varphi}$ and $\bar{\psi}'$ are approximately unitarily equivalent.  By a reindexing argument, there is a unitary $u \in \mathcal{R}^\omega$ such that $\bar{\varphi} = \mathrm{ad}(u) \circ \bar{\psi}'$.  Now, $\varphi = \mathrm{ad}(u) \circ \pi \circ \psi'$.  As the unitary group of $\mathcal{R}^\omega$ is path connected, there is a unitary $v \in \mathcal{Q}_\omega$ such that $\pi(v) = u$.  Define $\psi = \mathrm{ad}(v) \circ \psi' : A \rightarrow \mathcal{Q}_\omega$ and note that $\pi \circ \psi = \varphi$.  Since $\psi'$ is nuclear, $\psi$ is also nuclear.
\end{proof}

In the setting of Theorem \ref{thm:GabeTWW}, the nuclearity condition on $\varphi$ in Theorem \ref{thm:LiftingTheorem} is automatic.  The following lemma is implicitly contained in the proof of Proposition 3.5 in \cite{Gabe:TWW} with the key tool being Lemma 3.3 in \cite{Dadarlat:QDMorphisms}.

\begin{lemma}\label{lem:Exactness}
Suppose $A$ is an exact C*-algebra and $\varphi : A \rightarrow \mathcal{R}^\omega$ is a completely positive, contractive map with a completely positive, contractive lift $\tilde{\varphi} : A \rightarrow \ell^\infty(\mathcal{R})$.  Then $\varphi$ is nuclear.
\end{lemma}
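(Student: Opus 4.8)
The plan is to factor $\varphi$ through a sequence of matrix algebras using exactness of $A$ together with the lift $\tilde\varphi : A \to \ell^\infty(\mathcal{R})$. The key point is that each coordinate map $\varphi_n : A \to \mathcal{R}$ of the lift is itself unital (or contractive) completely positive, and since $\mathcal{R}$ is hyperfinite there is an increasing sequence of finite-dimensional subfactors $\mathbb{M}_{k(1)} \subseteq \mathbb{M}_{k(2)} \subseteq \cdots$ whose union is weak$^*$-dense in $\mathcal{R}$, together with trace-preserving conditional expectations $E_m : \mathcal{R} \to \mathbb{M}_{k(m)}$. Composing, one gets completely positive contractive maps $E_m \circ \varphi_n : A \to \mathbb{M}_{k(m)}$, and the plan is to use these to build a point-norm approximation of $\varphi$ by maps that factor through matrix algebras. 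The subtlety is that $E_m$ is only continuous in the $2$-norm on $\mathcal{R}$, not in the operator norm, so $E_m \circ \varphi_n$ does not in general converge to $\varphi_n$ in the point-norm topology of $\mathcal{R}$; this is precisely why exactness of $A$ is needed and why the lemma is not completely trivial.

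To get around this, I would invoke Lemma 3.3 in \cite{Dadarlat:QDMorphisms} (as flagged in the statement): for an exact C$^*$-algebra $A$, a completely positive contractive map into a C$^*$-algebra that admits a suitable completely positive contractive lift with good local approximation properties is automatically nuclear. Concretely, the argument should run as follows. First, fix a finite set $F \subseteq A$ and $\varepsilon > 0$. Using the lift $\tilde\varphi = (\varphi_n)_n$, for $\omega$-many $n$ the map $\varphi_n$ approximates $\varphi$ on $F$ in the $2$-norm of $\mathcal{R}^\omega$ (indeed in the relevant seminorm coming from the ultrapower construction). Second, pass to a large finite-dimensional subfactor $\mathbb{M}_{k(m)} \subseteq \mathcal{R}$ and use the conditional expectation $E_m$; the composition $E_m \circ \varphi_n : A \to \mathbb{M}_{k(m)}$ followed by the inclusion $\mathbb{M}_{k(m)} \hookrightarrow \mathcal{R}$ approximates $\varphi_n$ in $2$-norm on $F$. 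Third — and this is where exactness enters — invoke the Dadarlat lemma to upgrade the $2$-norm factorization through matrices into an honest point-norm-nuclear factorization: exactness of $A$ forces the map, which locally factors through matrix algebras approximately in the $2$-norm and has a global completely positive contractive lift, to actually be nuclear in the point-norm sense into $\mathcal{R}^\omega$.

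The main obstacle, and the step I expect to require the most care, is bridging the gap between $2$-norm approximation and operator-norm (i.e. genuine point-norm) approximation. This is not formal: a completely positive map into $\mathcal{R}$ that is close to a matrix-factored map in the $2$-norm need not be close in operator norm, and without exactness the conclusion can fail. The whole content of Lemma 3.3 in \cite{Dadarlat:QDMorphisms} is that exactness (equivalently, local reflexivity of $A$, via the characterization used elsewhere in this paper) lets one promote the weak $2$-norm factorization to point-norm nuclearity; so the real work is in correctly citing and applying that result with the matrix algebras $\mathbb{M}_{k(m)}$ and the conditional expectations $E_m$ playing the role of the approximating system, and checking that the completely positive contractive lift $\tilde\varphi : A \to \ell^\infty(\mathcal{R})$ supplies exactly the hypothesis that lemma demands. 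Once that is in place, nuclearity of $\varphi$ follows immediately, and the proof is complete.
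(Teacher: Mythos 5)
Your overall strategy is the same as the paper's: pass to the coordinate maps $\varphi_n : A \to \mathcal{R}$ of the lift, compress by trace\-/preserving conditional expectations $E_n$ onto an increasing sequence of finite\-/dimensional subfactors, and invoke Lemma 3.3 of \cite{Dadarlat:QDMorphisms} together with exactness. However, your third step misstates what that lemma does, and as written the argument would stall there. The lemma does not ``upgrade a $2$-norm factorization through matrices into a point-norm nuclear factorization''; it says that if $A$ is exact and each $\psi_n : A \to B_n$ is a nuclear completely positive contractive map, then the induced map $A \to \prod_n B_n$ is nuclear (a direct product of nuclear maps need not be nuclear without exactness). The maps you feed into it are $\psi_n := E_n \circ \varphi_n : A \to F_n$, and these are nuclear for the trivial reason that they factor \emph{exactly}, not approximately, through finite\-/dimensional algebras. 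Exactness then yields that $\psi = (\psi_n)_n : A \to \prod_n F_n \subseteq \ell^\infty(\mathcal{R})$ is nuclear in the honest point-norm sense.

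The $2$-norm versus operator-norm gap you rightly flag as the main obstacle is never bridged by exactness, because it never needs to be bridged at all: the quotient map $q : \ell^\infty(\mathcal{R}) \to \mathcal{R}^\omega$ is precisely the quotient by the ideal of sequences that are $2$-norm null along $\omega$. Arranging (by letting the $F_n$ grow fast enough relative to a dense sequence in $A$) that $\|\varphi_n(a) - \psi_n(a)\|_2 \to 0$ for all $a \in A$ therefore gives the exact identity $q \circ \tilde{\varphi} = q \circ \psi$, i.e.\ $\varphi = q \circ \psi$, and nuclearity of $\varphi$ follows since $q$ is a $*$-homomorphism. So the division of labour is: exactness handles the direct product of the exactly-factored maps, and the tracial ultrapower structure absorbs the $2$-norm error. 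Once you reassign the roles this way, your outline becomes the paper's proof.
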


\begin{proof}
Let $\varphi_n : A \rightarrow \mathcal{R}$ denote the composition of $\tilde{\varphi}$ with the projection $\ell^\infty(\mathcal{R}) \rightarrow \mathcal{R}$ onto the $n$th component and note that each $\varphi_n$ is completely positive and contractive.  Let $F_n \subseteq \mathcal{R}$ be an increasing sequence of finite dimensional factors with dense union and let $E_n : \mathcal{R} \rightarrow F_n$ be a trace-preserving expectation for each $n \geq 1$.  Each of the maps $\psi_n := E_n \circ \varphi_n : A \rightarrow F_n$ is nuclear since $F_n$ is finite dimensional.  Let $\psi : A \rightarrow \prod F_n \subseteq \ell^\infty(\mathcal{R})$ denote the direct product of the maps $\psi_n$ for $n \geq 1$.  Since $A$ is exact, $\psi$ is nuclear by Lemma 3.3 in \cite{Dadarlat:QDMorphisms}.  By construction, $\|\varphi_n(a) - \psi_n(a)\|_2 \rightarrow 0$ for all $a \in A$.  Hence if $q : \ell^\infty(\mathcal{R}) \rightarrow \mathcal{R}^\omega$ denotes the quotient map, then $\varphi = q \circ \tilde{\varphi} = q \circ \psi$.  As $\psi$ is nuclear, $\varphi$ is also nuclear.
\end{proof}

\begin{proof}[\textbf{Proof of Theorem \ref{thm:GabeTWW}}]
Suppose $\tau$ be a faithful, amenable trace on $A$.  Fix a trace preserving *-homomorphism $\varphi : A \rightarrow \mathcal{R}^\omega$ with a completely positive lift $A \rightarrow \ell^\infty(\mathcal{R})$.  By Lemma \ref{lem:Exactness}, $\varphi$ is nuclear, and since $\tau$ is faithful, $\varphi$ is also faithful.  By Theorem \ref{thm:LiftingTheorem}, there is a nuclear *-homomorphism $\psi : A \rightarrow \mathcal{Q}_\omega$ with $\pi \circ \psi = \varphi$.  In particular, $\psi$ is trace preserving and, by the Choi-Effros Lifting Theorem, $\psi$ has a completely positive lift $A \rightarrow \ell^\infty(\mathcal{Q})$.  Hence $\tau$ is quasidiagonal.
\end{proof}

\begin{acknowledgements}
In the first version of this paper posted on the arXiv, Theorem \ref{thm:LiftingTheorem} was proven (although not explicitly stated) when $A$ is nuclear and $\varphi$ is non-unital.  This is enough to prove Theorem \ref{thm:GabeTWW} in the case when $A$ is nuclear as stated in \cite{TikuisisWhiteWinter} by using the $2 \times 2$-matrix argument used in the proof of Theorem \ref{thm:LiftingTheorem} to reduce to the non-unital setting.  The modifications needed to extend the proof to the exact setting were suggested by Jamie Gabe.  The idea of using the classification of injective von Neumann algebras to produce a unital version of Theorem \ref{thm:LiftingTheorem} grew out of a conversation with Aaron Tikuisis and Stuart White.  I am also grateful to Tim Rainone for several helpful conversation related to this work and helpful comments on earlier versions of this paper.  Finally, I would like to thank the referee for many valuable comments which improved the exposition of this paper.
\end{acknowledgements}

\end{document}